\documentclass[12pt]{article}
%	options include 12pt or 11pt or 10pt
%	classes include article, report, book, letter, thesis
\title{Indecomposable objects determined by their index in Higher Homological Algebra}
\author{Joseph Reid}
\date{}
\usepackage{fancyhdr}
\usepackage{eqparbox}
\usepackage{amsfonts}
\usepackage{amsthm}
\usepackage{amsmath}
\usepackage{amssymb}
\usepackage{wasysym}
\usepackage{mathrsfs}
\usepackage[margin=1in]{geometry}
\theoremstyle{definition}

\newtheorem{theorem}{Theorem}[section]
\newtheorem*{theorem*}{Theorem}
\newtheorem{corollary}[theorem]{Corollary}
\newtheorem{proposition}[theorem]{Proposition}
\newtheorem{lemma}[theorem]{Lemma}
\newtheorem{remark}[theorem]{Remark}

\newtheorem{definition}[theorem]{Definition}

\usepackage{tikz}
\usetikzlibrary{arrows,positioning}
\usetikzlibrary{decorations.markings}

% the node stylings
\tikzset{
  big dot/.style={
    circle, inner sep=0pt, 
    minimum size=3mm, fill=black
 }
}
\tikzset{
  normal dot/.style={
    circle, inner sep=0pt, 
    minimum size=1.5mm, fill=black
 }
}

% Macros for easier writing

\newcommand{\homs}{\textrm{Hom}_{\mathscr{C}}}
\newcommand{\tilthoms}{\textrm{Hom}_{\frac{\mathscr{C}}{[\Sigma^d \mathscr{T}]}}}

% Footer for first page

\fancyfoot[L]{  \vspace{.6\baselineskip}
  \scriptsize 2010 \textit{Mathematics Subject Classification.} 05E15, 16G10, 18E10, 18E30. \\
\textit{Key words and phrases}. Cluster category, cluster tilting subcategory, higher angulated category, triangulated category.}

\begin{document}
\maketitle
\thispagestyle{fancy}
ABSTRACT. Let $\mathscr{C}$ be a 2-Calabi-Yau triangulated category, and let $\mathscr{T}$ be a cluster tilting subcategory of $\mathscr{C}$. An important result from Dehy and Keller tells us that a rigid object $c \in \mathscr{C}$ is uniquely defined by its index with respect to $\mathscr{T}$.

The notion of triangulated categories extends to the notion of $(d+2)$-angulated categories. Thanks to a paper by Oppermann and Thomas, we now have a definition for cluster tilting subcategories in higher dimensions. This paper proves that under a technical assumption, an indecomposable object in a $(d+2)$-angulated category is uniquely defined by its index with respect to a higher dimensional cluster tilting subcategory. We also demonstrate that this result applies to higher dimensional cluster categories of Dynkin type $A$.
\section{Introduction}
When dealing with triangulated categories, the index is an interesting invariant. In order to define the index we must first make the following definition, which is originally due to Iyama in the abelian case, see \cite[Definition~2.2]{Iyama}:
\begin{definition}
Let $\mathscr{C}$ be a triangulated category with translation functor $\Sigma$. Then a full subcategory $\mathscr{T}$ is a \textit{cluster tilting subcategory} if:
\begin{itemize}
\item[(i)] $\mathscr{T}$ is contravariantly and covariantly finite.
\item[(ii)] $X \in \mathscr{T}$ if and only if $\textrm{Ext}(X,\mathscr{T}) = 0$.
\item[(iii)] $X \in \mathscr{T}$ if and only if $\textrm{Ext}(\mathscr{T},X) = 0$.
\end{itemize}

If there exists an object $T \in \mathscr{T}$ such that $\textrm{add}(T) = \mathscr{T}$, then we call $T$ a \textit{cluster tilting object}.

\end{definition} 

We have the result from \cite[Lemma~3.2(1)]{KoenigZhu} that given a triangulated category $\mathscr{C}$ with a cluster tilting subcategory $\mathscr{T}$, for every $c \in \mathscr{C}$, there exists a triangle
\begin{align*}
t_1 \to t_0 \to c \to \Sigma(t_1)
\end{align*}
in $\mathscr{C}$ such that $t_0, t_1 \in \mathscr{T}$.

If we have a cluster tilting subcategory $\mathscr{T}$ we may construct the \textit{split Grothendieck group} for $\mathscr{T}$, which we denote $K_0^{\textrm{Split}}(\mathscr{T})$. This group is the abelian group generated by the objects of $\mathscr{T}$, modulo all the relations of the form $[t]=[t_0] + [t_1]$ when $t \cong t_0 \oplus t_1$. Using this, we may define the notion of index:

\begin{definition}[\cite{Palu}]\label{indDef}
Let $\mathscr{C}$ be a triangulated category, and let $\mathscr{T}$ be a cluster tilting subcategory. We define $\textrm{Ind}_{\mathscr{T}}: \mathscr{C} \to K_0^{\textrm{Split}}(\mathscr{T})$. For $c \in \mathscr{C}$ there is a triangle 
\begin{align*}
t_1 \to t_0 \to c \to \Sigma(t_1)
\end{align*}
such that $t_0, t_1 \in \mathscr{T}$. Then
\begin{align*}
\textrm{Ind}_{\mathscr{T}}(c) = [t_0] - [t_1].
\end{align*}
This is well defined by \cite[Lemma~2.1]{Palu}.
\end{definition}

Use of the index has given us some powerful tools in the study of triangulated categories. One key result which has inspired this paper is:

\begin{theorem*}[Dehy-Keller, {\cite[Theorem~2.3]{DehyKeller}}]
Let $K$ be an algebraically closed field, let $\mathscr{C}$ be a $K$-linear Hom-finite triangulated category with split idempotents, and assume also that $\mathscr{C}$ is 2-Calabi-Yau. Let $\mathscr{T}$ be a cluster tilting subcategory of $\mathscr{C}$. Then the index as defined in definition \ref{indDef} induces an injection from the set of isomorphism classes of rigid objects into $K_0^{\textrm{Split}}(\mathscr{T})$.
\end{theorem*}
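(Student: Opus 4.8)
The plan is to combine a Krull--Schmidt reduction with a geometric argument in the spirit of Dehy and Keller, using the $2$-Calabi--Yau hypothesis essentially only through the vanishing $\textrm{Ext}^1(\mathscr{T},\mathscr{T})=0$ forced by the cluster tilting axioms, together with the rigidity hypothesis. Since $\mathscr{C}$ is $K$-linear, Hom-finite and idempotent-complete over an algebraically closed field, it is Krull--Schmidt, so $K_0^{\textrm{Split}}(\mathscr{T})$ is free abelian on the isomorphism classes of indecomposable objects of $\mathscr{T}$. Hence, given rigid $c,c'$ with $\textrm{Ind}_{\mathscr{T}}(c)=\textrm{Ind}_{\mathscr{T}}(c')$ and defining triangles $t_1\to t_0\to c\to\Sigma t_1$ and $s_1\to s_0\to c'\to\Sigma s_1$ as in Definition~\ref{indDef}, the relation $[t_0]-[t_1]=[s_0]-[s_1]$ forces $t_0\oplus s_1\cong s_0\oplus t_1$. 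Write $P$ for this common object, $Q:=t_1\oplus s_1\cong s_1\oplus t_1$, and note that $P,Q\in\textrm{add}(\mathscr{T})$, whence $\textrm{Ext}^1(P,P)=\textrm{Ext}^1(P,Q)=0$.

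Next I would exhibit $c$ and $c'$ as cones of morphisms lying in the \emph{same} finite-dimensional $K$-vector space $V:=\textrm{Hom}_{\mathscr{C}}(Q,P)$. Forming the direct sum of the triangle for $c$ with the split triangle $s_1\xrightarrow{1}s_1\to 0\to\Sigma s_1$ and transporting along $t_0\oplus s_1\cong P$ yields a triangle $Q\xrightarrow{\varphi}P\to c\to\Sigma Q$; likewise the triangle for $c'$, summed with $t_1\xrightarrow{1}t_1\to 0$ and transported along $Q\cong s_1\oplus t_1$, yields $Q\xrightarrow{\varphi'}P\to c'\to\Sigma Q$. Thus $c\cong\textrm{cone}(\varphi)$ and $c'\cong\textrm{cone}(\varphi')$ with $\varphi,\varphi'\in V$. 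The algebraic group $G:=\textrm{Aut}(P)\times\textrm{Aut}(Q)$ --- an open, hence irreducible, subvariety of $\textrm{End}(P)\times\textrm{End}(Q)$ --- acts morphically on the affine space $V$ by $(\beta,\alpha)\cdot\psi=\beta\psi\alpha^{-1}$, and two morphisms in one $G$-orbit have isomorphic cones, since a commuting square with invertible verticals extends to an isomorphism of triangles.

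The crux is to show that if $e:=\textrm{cone}(\psi)$ is rigid, then the orbit $G\cdot\psi$ is dense (equivalently open) in $V$. The tangent space to this orbit at $\psi$ is the image of the linear map $\textrm{End}(P)\oplus\textrm{End}(Q)\to V$, $(b,a)\mapsto b\psi-\psi a$, so the codimension of $G\cdot\psi$ in $V$ equals $\dim\,\textrm{coker}\big((b,a)\mapsto b\psi-\psi a\big)$. Chasing the long exact sequences obtained by applying $\textrm{Hom}(-,\Sigma P)$, $\textrm{Hom}(-,\Sigma Q)$ and $\textrm{Hom}(e,\Sigma-)$ to the triangle $Q\xrightarrow{\psi}P\xrightarrow{g}e\xrightarrow{\delta}\Sigma Q$, and repeatedly invoking $\textrm{Ext}^1(P,P)=\textrm{Ext}^1(P,Q)=0$, identifies this cokernel with the image of $(\Sigma g)_*\colon\textrm{Hom}(e,\Sigma P)\to\textrm{Hom}(e,\Sigma e)$, a subspace of $\textrm{Ext}^1(e,e)=\textrm{Hom}(e,\Sigma e)$. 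If $e$ is rigid this vanishes, so $G\cdot\psi$ has dimension $\dim V$; being a smooth locally closed subvariety of full dimension in the irreducible variety $V$, its closure is all of $V$, so it is dense open. Applying this to $\psi=\varphi$ and $\psi=\varphi'$, the orbits $G\cdot\varphi$ and $G\cdot\varphi'$ are both dense in the irreducible space $V$, hence meet, hence coincide; therefore $c\cong\textrm{cone}(\varphi)\cong\textrm{cone}(\varphi')\cong c'$.

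The main obstacle is precisely this last identification: the homological bookkeeping showing that rigidity of the cone pins down the codimension of the orbit to be zero. The remaining ingredients are standard --- Krull--Schmidt-ness and freeness of $K_0^{\textrm{Split}}(\mathscr{T})$, the tangent-space computation for an algebraic group action, and the elementary fact that two dense open subsets of an irreducible variety intersect --- but they are exactly where the hypotheses enter: $K$ algebraically closed and $\mathscr{C}$ Hom-finite with split idempotents make both the Krull--Schmidt reduction and the ``two dense opens meet'' step legitimate, while $2$-Calabi--Yau-ness is needed (beyond the cluster tilting axioms) only to render rigidity a self-dual condition. The degenerate case $c\in\textrm{add}(\mathscr{T})$ is covered by the same argument with $t_1=0$.
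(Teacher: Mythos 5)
The paper does not prove this statement: it is quoted as background, as Theorem~2.3 of Dehy--Keller, so there is no internal proof to measure yours against. Judged on its own, your argument is essentially the original Dehy--Keller degeneration proof, correctly reconstructed, and I find no gap in it. The one genuine organisational difference is that Dehy--Keller first reduce to presentations $t_1\to t_0\to c$ in which $t_0$ and $t_1$ share no indecomposable direct summand (which is where they invoke the $2$-Calabi--Yau duality $\textrm{Ext}^1(X,Y)\cong D\,\textrm{Ext}^1(Y,X)$), whereas you pad both triangles by split triangles so that $c$ and $c'$ become cones of two elements of the single space $\textrm{Hom}_{\mathscr{C}}(Q,P)$; this is legitimate, since a direct sum of triangles is a triangle and Krull--Schmidt gives cancellation in $K_0^{\textrm{Split}}(\mathscr{T})$, and it buys you a shorter proof. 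Your cokernel identification checks out: the long exact sequence chase needs only $\textrm{Hom}(P,\Sigma P)=\textrm{Hom}(P,\Sigma Q)=0$, which follow from the cluster tilting axioms, plus $\textrm{Hom}(e,\Sigma e)=0$ from rigidity. Two small points to tidy. First, in positive characteristic the orbit map need not be separable, so the asserted equality between the codimension of $G\cdot\psi$ and $\dim\,\textrm{coker}\bigl((b,a)\mapsto b\psi-\psi a\bigr)$ should be replaced by the inequality $\dim(G\cdot\psi)\ge\dim V-\dim\,\textrm{coker}$, which is all you actually use. Second, contrary to your closing remark, the proof as written never uses the Calabi--Yau hypothesis at all (rigidity enters only through $\textrm{Hom}(e,\Sigma e)=0$, which needs no self-duality); that is not an error --- it means your argument proves a formally more general statement --- but you should say so explicitly rather than attribute a role to $2$-Calabi--Yau-ness that it does not play.
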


Another source of inspiration is the following similar result from classic representation theory; note that it concerns indecomposable objects only.

\begin{theorem*}[Auslander-Reiten-Smal$\o$, {\cite[Theorem~IX.4.7]{ARS}}]
Let $\Lambda$ be an Artin algebra over an algebraically closed field $K$, and let $M$ and $N$ be indecomposable finite dimensional $\Lambda$-modules. Let $P_1(M) \to P_0(M) \to M \to 0$ and $P_1(N) \to P_0(N) \to N \to 0$ be the minimal projective resolutions of $M$ and $N$ respectively. If $P_1(M) \cong P_1(N)$ and $P_0(M) \cong P_0(N)$, and $M$ and $N$ are not the start of so-called short chains, then $M \cong N$.
\end{theorem*}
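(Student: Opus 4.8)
The plan is to translate everything into a question about projective presentations, reduce the theorem to a rigidity statement about the presentation maps, and then show that ``$M$ is not the start of a short chain'' is exactly what makes that rigidity hold. First I would dispose of the projective case: if $M$ is projective, minimality of its presentation forces $P_1(M)=0$, hence $P_1(N)=0$, so $N$ is projective and $M\cong P_0(M)\cong P_0(N)\cong N$. So assume $M,N$ are non-projective. Fixing identifications $P_0(M)\cong P_0(N)=:P_0$ and $P_1(M)\cong P_1(N)=:P_1$, write $M=\operatorname{coker}(f)$ and $N=\operatorname{coker}(g)$ with $f,g\in\operatorname{rad}\operatorname{Hom}_\Lambda(P_1,P_0)$. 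Because $P_1$ is projective and $g$ maps it onto $\operatorname{im}g$, a map $P_1\to P_0$ factors through $g$ exactly when its image lies in $\operatorname{im}g$, so the whole theorem reduces to producing an automorphism $\beta\in\operatorname{Aut}(P_0)$ with $\beta(\operatorname{im}f)\subseteq\operatorname{im}g$ and, symmetrically, $\beta'\in\operatorname{Aut}(P_0)$ with $\beta'(\operatorname{im}g)\subseteq\operatorname{im}f$; these descend to surjections $M\twoheadrightarrow N$ and $N\twoheadrightarrow M$, whose composite is a surjective endomorphism of the finite-dimensional module $M$, hence an isomorphism, forcing $M\cong N$.

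It is worth noting that the setup is self-dual under the transpose: $\operatorname{Hom}_\Lambda(-,\Lambda)$ carries a minimal projective presentation of $M$ to one of $\operatorname{Tr}M$ with $P_0$ and $P_1$ interchanged, so $\operatorname{Tr}M$ and $\operatorname{Tr}N$ are indecomposable non-projective $\Lambda^{\mathrm{op}}$-modules with coinciding minimal presentations and $M\cong\operatorname{Tr}\,\operatorname{Tr}M$ (since $M$ has no projective summands); I would use this symmetry to produce $\beta$ and $\beta'$ by one and the same argument. For the existence of $\beta$, the relevant object is the linear subspace $V=\{\beta\in\operatorname{End}(P_0):\beta(\operatorname{im}f)\subseteq\operatorname{im}g\}$: since $K$ is algebraically closed, $\operatorname{Aut}(P_0)$ is Zariski-dense in $\operatorname{End}(P_0)$, so $V$ meets $\operatorname{Aut}(P_0)$ as soon as $V$ is not contained in the (proper, closed) set of non-invertible endomorphisms, and I would recast this non-containment as the vanishing of an $\operatorname{Ext}$-type obstruction, then feed it through Auslander--Reiten duality $D\operatorname{Ext}^1_\Lambda(C,A)\cong\underline{\operatorname{Hom}}_\Lambda(A,\tau C)$ to express it in terms of morphisms landing in $\tau M$.

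The hard part will be this final step: showing that the obstruction is annihilated precisely because $M$ is not the start of a short chain. The dictionary I would aim to set up is that a nonzero obstruction yields an indecomposable module $X$ occurring in the presentation data of $N$ together with nonzero maps $M\to X$ and $X\to\tau M$ (the latter via the Auslander--Reiten reformulation above), i.e.\ a short chain starting at $M$, contradicting the hypothesis; carefully tracking the boundary terms in the long exact sequences --- the passage between $\operatorname{Hom}$ and its stable and costable versions, and the contribution of projective and injective summands --- is where the subtlety sits. Indecomposability of \emph{both} $M$ and $N$ is used only to make $\operatorname{End}_\Lambda(M)$ and $\operatorname{End}_\Lambda(N)$ local, which is what turns the two-sided surjection of the first paragraph into an isomorphism; the algebraic closedness of $K$ is otherwise needed only for the density statement and to identify endomorphism rings of simple modules with $K$.
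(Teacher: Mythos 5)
There is a genuine gap, and it sits exactly where the content of the theorem lies. Your reduction is fine as far as it goes: an automorphism $\beta\in\operatorname{Aut}(P_0)$ with $\beta(\operatorname{im}f)\subseteq\operatorname{im}g$, together with its counterpart $\beta'$, would give mutually surjective maps between $M$ and $N$ and hence an isomorphism. But you never produce $\beta$. The Zariski-density step is vacuous: ``$V$ meets $\operatorname{Aut}(P_0)$'' and ``$V$ is not contained in the set of non-invertible endomorphisms'' are literally the same assertion, so no reduction has taken place. The ``Ext-type obstruction'' whose vanishing is supposed to encode this, and the dictionary sending a nonzero obstruction to an indecomposable $X$ with nonzero maps $M\to X$ and $X\to\tau M$, are announced but never constructed or verified. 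Since this is the only point at which the short-chain hypothesis enters, the proposal proves nothing beyond the easy observation that a compatible automorphism of $P_0$ would suffice.

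For comparison: the paper only quotes this theorem as background, but the argument it generalises in Section 3 is the standard one, and that argument never constructs a map $M\to N$ at all. From a minimal presentation $P_1\to P_0\to M\to 0$ one gets, for every module $X$, an exact sequence $0\to\operatorname{Hom}(M,X)\to\operatorname{Hom}(P_0,X)\to\operatorname{Hom}(P_1,X)\to D\operatorname{Hom}(X,\tau M)\to 0$ (the last term via $\operatorname{Tr}$ and Auslander--Reiten duality); this is the module-theoretic ancestor of Proposition \ref{theprop}(i) combined with Proposition \ref{restateProp}. Consequently $\dim_K\operatorname{Hom}(M,X)-\dim_K\operatorname{Hom}(X,\tau M)$ depends only on $P_0(M)$ and $P_1(M)$. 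If $M$ is not the start of a short chain, the two terms cannot be simultaneously nonzero for indecomposable $X$, so $\dim_K\operatorname{Hom}(M,X)=\max\bigl(0,\dim_K\operatorname{Hom}(P_0,X)-\dim_K\operatorname{Hom}(P_1,X)\bigr)$ exactly as in Proposition \ref{extraProp1}; the same holds for $N$, so $\dim_K\operatorname{Hom}(M,-)$ and $\dim_K\operatorname{Hom}(N,-)$ agree on indecomposables, and Auslander's determination theorem (used in Proposition \ref{extraProp2}) gives $M\cong N$. If you insist on your route, you would in effect have to prove this dimension identity anyway and then additionally show that some morphism $M\to N$ lifts to an automorphism of $P_0$ --- which is strictly harder than the counting argument it is meant to replace.
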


The concept of triangulated categories extends to the notion of $(d+2)$-angulated categories, and the notion of cluster tilting subcategories extends to the notion of Oppermann-Thomas cluster tilting subcategories, see \cite[Definition~2.1]{GKO} and \cite[Definition~5.3]{OppermannThomas}. Importantly, we may also define the index in a $(d+2)$-angulated category with respect to an Oppermann-Thomas cluster tilting subcategory, see \cite[Definition~B]{JustJorg}. We also here fix some notation: Let $\mathscr{C}$ be a category, and $\mathscr{T}$ some subcategory of $\mathscr{C}$. Then for any $x, y \in \mathscr{C}$ we denote by $\homs{}^{[\mathscr{T}]}(x,y)$ the space made up of elements of $\homs{}(x,y)$ that factor through some $t \in \mathscr{T}$

Using these definitions, this paper will prove the following results:
\begingroup
\renewcommand{\thetheorem}{\Alph{theorem}}
\setcounter{theorem}{0}
\begin{theorem}\label{theRmk}
Let $\mathscr{C}$ be a $K$-linear, Hom-finite $(d+2)$-angulated category with split idempotents and $d$ odd, and let $\mathscr{T} =\textrm{add}(T)$ be an Oppermann-Thomas cluster tilting subcategory. Assume that if $c, x \in \mathscr{C}$ are indecomposable, then $\tilthoms{}(c, x)$ and $\homs{}^{[\mathscr{T}]}(\Sigma^{-d}(c),x)$ cannot be simultaneously non-zero. Then each indecomposable object $c \in \mathscr{C}$ is uniquely determined by its index with respect to $\mathscr{T}$ up to isomorphism.
\end{theorem}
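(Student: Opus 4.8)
\noindent The plan is to carry out the Dehy--Keller argument from \cite[Theorem~2.3]{DehyKeller} inside the $(d+2)$-angulated category, using the standing Hom-vanishing hypothesis exactly where the classical proof uses rigidity, and to conclude in the manner of the Auslander--Reiten--Smal{\o} theorem recalled above.

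First I would fix, for an indecomposable $c\in\mathscr{C}$, a $(d+2)$-angle
\[
t_d \longrightarrow t_{d-1} \longrightarrow \cdots \longrightarrow t_1 \longrightarrow t_0 \longrightarrow c \longrightarrow \Sigma^d t_d
\]
with every $t_i\in\mathscr{T}=\textrm{add}(T)$; this is the higher analogue of the Koenig--Zhu triangle, available because $\mathscr{T}$ is an Oppermann--Thomas cluster tilting subcategory, and by \cite{JustJorg} one has $\textrm{Ind}_{\mathscr{T}}(c)=\sum_{i=0}^{d}(-1)^{i}[t_i]$ in $K_0^{\textrm{Split}}(\mathscr{T})$ (normalised so as to recover Palu's definition for $d=1$). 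As $\mathscr{C}$ is Hom-finite with split idempotents, it is Krull--Schmidt, so --- just as one constructs a minimal projective presentation --- one may discard every trivial (contractible) direct summand of the $(d+2)$-angle and arrange each structure map to be a minimal approximation. I will call the result a \emph{minimal $\mathscr{T}$-resolution} of $c$: it is unique up to isomorphism, depends only on $c$, and carries the same index as the original $(d+2)$-angle.

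The technical core is the claim, which is where the hypothesis is used, that in a minimal $\mathscr{T}$-resolution of an indecomposable $c\notin\Sigma^d\mathscr{T}$ the objects $\bigoplus_{i\ \mathrm{even}}t_i$ and $\bigoplus_{i\ \mathrm{odd}}t_i$ share no isomorphic indecomposable direct summand. To prove it, suppose an indecomposable $u\in\mathscr{T}$ is a summand of $t_j$ for some even $j$ and of $t_k$ for some odd $k$. Running through the rotations of the $(d+2)$-angle and using that all structure maps are minimal --- so that no composite along the $(d+2)$-angle can absorb this copy of $u$ --- one extracts simultaneously a morphism $c\to u$ that is nonzero in $\tilthoms{}(c,u)$ and a morphism $\Sigma^{-d}(c)\to u$ that factors through $\mathscr{T}$, that is, a nonzero element of $\homs{}^{[\mathscr{T}]}(\Sigma^{-d}(c),u)$; since $c$ and $u$ are indecomposable this contradicts the hypothesis. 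I expect the precise version of this extraction --- faithfully tracking a putative common summand through the rotations and confirming that the composites involved really are minimal --- to be the main obstacle; it is also where the oddness of $d$ should be used, since the parity of the suspensions accumulated along the rotations is what makes the two morphisms land in $\tilthoms{}(c,u)$ and $\homs{}^{[\mathscr{T}]}(\Sigma^{-d}(c),u)$ respectively, and where one needs the exactness of $\homs{}(\mathscr{T},-)$ and $\homs{}(-,\mathscr{T})$ along a $(d+2)$-angle.

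Granting the claim, let $c$ and $c'$ be indecomposable with $\textrm{Ind}_{\mathscr{T}}(c)=\textrm{Ind}_{\mathscr{T}}(c')$, and fix minimal $\mathscr{T}$-resolutions $(t_i)$ and $(t_i')$. As $K_0^{\textrm{Split}}(\mathscr{T})$ is free abelian on the isomorphism classes of indecomposables of $\mathscr{T}$, equating the indices and applying the claim to both sides forces $\bigoplus_{i\ \mathrm{even}}t_i\cong\bigoplus_{i\ \mathrm{even}}t_i'$ and $\bigoplus_{i\ \mathrm{odd}}t_i\cong\bigoplus_{i\ \mathrm{odd}}t_i'$ (this subsumes the degenerate situations in which $c$ or $c'$ lies in $\Sigma^d\mathscr{T}$, where one of the two sums is zero). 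For $d=1$ this already gives $t_0\cong t_0'$ and $t_1\cong t_1'$, so the two objects have isomorphic minimal $\mathscr{T}$-presentations; transporting through the equivalence $\mathscr{C}/[\Sigma\mathscr{T}]\simeq\textrm{mod}\,\textrm{End}_{\mathscr{C}}(T)$, under which the hypothesis becomes the statement that the module corresponding to $c$ does not begin a short chain, \cite[Theorem~IX.4.7]{ARS} yields $c\cong c'$. For general odd $d$ one must promote the agreement of the even and odd parts to a term-by-term isomorphism of the two minimal $\mathscr{T}$-resolutions as $(d+2)$-angles; I would attempt this by induction on the length of the resolution --- peeling off the minimal right $\mathscr{T}$-approximation $t_0\to c$, matching the resulting syzygy object by the inductive hypothesis, and invoking the standing assumption once more to rigidify the comparison. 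Since $c$ and $c'$ are objects of their own minimal $\mathscr{T}$-resolutions, such an isomorphism of $(d+2)$-angles restricts to the desired $c\cong c'$.
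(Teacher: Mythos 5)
Your proposal follows the Dehy--Keller template rather than the paper's argument, and as it stands it has a genuine gap at its core. The entire strategy rests on the ``claim'' that in a minimal $\mathscr{T}$-resolution of an indecomposable $c$ the objects $\bigoplus_{i\ \mathrm{even}}t_i$ and $\bigoplus_{i\ \mathrm{odd}}t_i$ share no indecomposable summand, and you do not prove it --- you explicitly defer the ``extraction'' to later and call it the main obstacle. It is not clear the claim is even true: minimality of the structure maps only forces consecutive maps to be radical maps, which at best controls common summands of $t_i$ and $t_{i+1}$; it says nothing about a summand shared by, say, $t_0$ and $t_3$, and the hypothesis of the theorem (that $\tilthoms{}(c,x)$ and $\homs{}^{[\mathscr{T}]}(\Sigma^{-d}(c),x)$ are not simultaneously nonzero) gives no visible mechanism for manufacturing the two required nonzero morphisms out of such a non-adjacent common summand. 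Even granting the claim, a second gap remains for $d>1$: equality of indices plus the claim only yields $\bigoplus_{\mathrm{even}}t_i\cong\bigoplus_{\mathrm{even}}t_i'$ and $\bigoplus_{\mathrm{odd}}t_i\cong\bigoplus_{\mathrm{odd}}t_i'$, which does not determine the individual $t_i$, and the proposed induction promoting this to a termwise isomorphism of $(d+2)$-angles (and then to $c\cong c'$) is not carried out; the final step would itself need an analogue of the Dehy--Keller lifting argument, which in the triangulated case leans on the $2$-Calabi--Yau property that Theorem A does not assume.

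The paper's proof avoids all of this and never touches minimality or a comparison of resolutions. It establishes the exact sequence
\begin{align*}
0 \to \tilthoms{}(c, x) \to \homs{}(t_0, x) \to \cdots \to \homs{}(t_d, x) \to \homs{}^{[\mathscr{T}]}(\Sigma^{-d}(c),x) \to 0,
\end{align*}
whence an Euler-characteristic identity whose right-hand side $\sum_{i}(-1)^i\dim_K\homs{}(t_i,x)$ depends only on $\mathrm{Ind}_{\mathscr{T}}(c)$ and $x$. The hypothesis together with $d$ odd then forces $\dim_K\tilthoms{}(c,x)=\max\bigl(0,\sum_i(-1)^i\dim_K\homs{}(t_i,x)\bigr)$, so the index determines all these dimensions. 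Finally the quotient $\mathscr{C}/[\Sigma^d\mathscr{T}]$ is identified with a $d$-cluster tilting subcategory of $\mathrm{mod}\,\mathrm{End}(T)$, where Auslander's result (via the finite left $\mathscr{D}$-resolutions of arbitrary modules) shows that an object is determined up to isomorphism by the dimensions of Hom out of it; the case $c\in\Sigma^d\mathscr{T}$ is handled separately and directly. If you want to salvage your route, you would need to actually prove the no-common-summand claim and the termwise rigidity of minimal resolutions, neither of which is routine; I recommend instead the dimension-counting argument, which is where the hypothesis and the parity of $d$ do their real work.
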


\begin{corollary}\label{corB}
Let $\mathscr{C}$ be a $K$-linear, Hom-finite, $2d$-Calabi-Yau $(d+2)$-angulated category with split idempotents and $d$ odd, and let $\mathscr{T} =\textrm{add}(T)$ be an Oppermann-Thomas cluster tilting subcategory. Assume that if $c, x \in \mathscr{C}$ are indecomposable, then $\tilthoms{}(c, x)$ and $\tilthoms{}(x, \Sigma^d(c))$ cannot be simultaneously non-zero. Then each indecomposable object $c \in \mathscr{C}$ is uniquely determined by its index with respect to $\mathscr{T}$ up to isomorphism.
\end{corollary}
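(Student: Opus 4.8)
The plan is to deduce Corollary~\ref{corB} from Theorem~\ref{theRmk}. All hypotheses of Theorem~\ref{theRmk} apart from the ``cannot be simultaneously non-zero'' one are already among the hypotheses of Corollary~\ref{corB}, and the two conclusions coincide word for word, so it suffices to show that the technical assumption of Corollary~\ref{corB} implies the one in Theorem~\ref{theRmk}. Both assumptions forbid $\tilthoms{}(c,x)$ from being non-zero simultaneously with a second space: for the corollary this second space is $\tilthoms{}(x,\Sigma^{d}c)$, and for the theorem it is $\homs{}^{[\mathscr{T}]}(\Sigma^{-d}c,x)$. Hence the reduction comes down to proving that, for indecomposable $c,x\in\mathscr{C}$,
\[
\homs{}^{[\mathscr{T}]}(\Sigma^{-d}c,x)\neq 0\ \Longrightarrow\ \tilthoms{}(x,\Sigma^{d}c)\neq 0 .
\]
Granting this, if the hypothesis of Theorem~\ref{theRmk} fails for some indecomposable $c,x$, then $\tilthoms{}(c,x)\neq 0$ and $\homs{}^{[\mathscr{T}]}(\Sigma^{-d}c,x)\neq 0$; the implication upgrades the latter to $\tilthoms{}(x,\Sigma^{d}c)\neq 0$, so the hypothesis of Corollary~\ref{corB} fails too. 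Contrapositively, the hypothesis of the corollary implies that of the theorem, and Theorem~\ref{theRmk} then yields the conclusion.

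I would establish the displayed implication in its contrapositive form: if $\tilthoms{}(x,\Sigma^{d}c)=0$, then $\homs{}^{[\mathscr{T}]}(\Sigma^{-d}c,x)=0$. Two standard ingredients are used. First, the $2d$-Calabi--Yau hypothesis provides a Serre functor $S$ on $\mathscr{C}$ naturally isomorphic to $\Sigma^{2d}$; since $S\Sigma^{-d}c\cong\Sigma^{2d}\Sigma^{-d}c=\Sigma^{d}c$, this yields a nondegenerate bilinear pairing
\[
\langle -,-\rangle\colon\ \homs{}(\Sigma^{-d}c,x)\times\homs{}(x,\Sigma^{d}c)\longrightarrow K
\]
which is compatible with composition: $\langle f,g\rangle$ is the value of the canonical trace map $\homs{}(\Sigma^{-d}c,\Sigma^{2d}\Sigma^{-d}c)\to K$ on $g\circ f$ (under the identification $\Sigma^{d}c\cong\Sigma^{2d}\Sigma^{-d}c$); in particular $\langle f,g\rangle=0$ whenever $g\circ f=0$, and nondegeneracy says that $f=0$ as soon as $\langle f,g\rangle=0$ for every $g$. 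Second, an Oppermann--Thomas cluster tilting subcategory is rigid with respect to the $(d+2)$-angulated suspension, i.e.\ $\homs{}(\mathscr{T},\Sigma^{d}\mathscr{T})=0$; this is immediate from \cite[Definition~5.3]{OppermannThomas}, being the higher analogue of $\textrm{Ext}(\mathscr{T},\mathscr{T})=0$.

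With these in hand the computation is short. Take $f\in\homs{}^{[\mathscr{T}]}(\Sigma^{-d}c,x)$, so $f=\beta\alpha$ with $\alpha\colon\Sigma^{-d}c\to t$ and $\beta\colon t\to x$ for some $t\in\mathscr{T}$, and let $g\colon x\to\Sigma^{d}c$ be arbitrary. The vanishing $\tilthoms{}(x,\Sigma^{d}c)=0$ means precisely that $g$ factors through $\Sigma^{d}\mathscr{T}=\textrm{add}(\Sigma^{d}T)$, say $g=\delta\gamma$ with $\gamma\colon x\to\Sigma^{d}t'$ and $\delta\colon\Sigma^{d}t'\to\Sigma^{d}c$ for some $t'\in\mathscr{T}$. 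Then $g\circ f=\delta\circ(\gamma\beta)\circ\alpha$ and $\gamma\beta\in\homs{}(t,\Sigma^{d}t')=0$ by rigidity, so $g\circ f=0$ and hence $\langle f,g\rangle=0$. As $g$ was arbitrary, nondegeneracy of the pairing forces $f=0$; thus $\homs{}^{[\mathscr{T}]}(\Sigma^{-d}c,x)=0$, which is what was needed. The only genuine obstacle is the first ingredient: making precise, for a Hom-finite $(d+2)$-angulated category with split idempotents and $d$ odd, that the $2d$-Calabi--Yau property produces a Serre functor isomorphic to $\Sigma^{2d}$ together with a composition-compatible, nondegenerate trace pairing — bookkeeping that runs exactly parallel to the triangulated case, but in which the sign conventions of $(d+2)$-angulated categories, and hence the standing hypothesis that $d$ is odd, must be handled with care. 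Once that is in place, the rigidity of $\mathscr{T}$ is read off the definition and the argument above completes the proof.
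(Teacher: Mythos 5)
Your proposal is correct, and its top-level strategy — reduce to Theorem~\ref{theRmk} by showing, via Serre duality, that the hypothesis of Corollary~\ref{corB} implies the hypothesis of Theorem~\ref{theRmk} — is exactly the paper's. The difference lies in how the comparison of the two technical hypotheses is established. The paper simply cites Proposition~\ref{restateProp}, which gives the full isomorphism $\homs{}^{[\Sigma^d\mathscr{T}]}(c,\Sigma^d(x))\cong D\tilthoms{}(x,\Sigma^d(c))$ (and hence, since $\dim_K\homs{}^{[\Sigma^d\mathscr{T}]}(c,\Sigma^d(x))=\dim_K\homs{}^{[\mathscr{T}]}(\Sigma^{-d}(c),x)$, the equivalence of the two non-vanishing conditions); that proposition is proved by running the defining Oppermann--Thomas $(d+2)$-angle of $c$ through $\homs{}(-,\Sigma^d(x))$ and dualising the resulting long exact sequence. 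You instead prove only the one implication actually needed, $\homs{}^{[\mathscr{T}]}(\Sigma^{-d}c,x)\neq 0\Rightarrow\tilthoms{}(x,\Sigma^d c)\neq 0$, by a direct pairing argument: a morphism factoring through $\mathscr{T}$ composed with one factoring through $\Sigma^d\mathscr{T}$ passes through $\homs{}(\mathscr{T},\Sigma^d\mathscr{T})=0$, so the two factoring subspaces annihilate each other under the Serre trace pairing and nondegeneracy finishes the job. This is more elementary (no $(d+2)$-angle, no long exact sequence) but yields only an injection $\homs{}^{[\mathscr{T}]}(\Sigma^{-d}c,x)\hookrightarrow D\tilthoms{}(x,\Sigma^d c)$ rather than the paper's isomorphism — which is all the corollary needs. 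The one ingredient you flag as a potential obstacle, the composition-compatible trace pairing, is in fact a formal consequence of the naturality in $X$ and $Y$ built into the paper's definition of a Serre functor (set $\mathrm{Tr}_X=t_{X,X}(-)(\mathrm{id}_X)$ and apply naturality), and no parity of $d$ enters here: note that Proposition~\ref{restateProp} itself is stated and proved for all $d$, the oddness of $d$ being needed only inside Theorem~\ref{theRmk}.
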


As with triangulated categories, thanks again to Oppermann and Thomas, there is a natural extension of cluster categories into higher dimensions. We will prove the following:
\begin{theorem}\label{thmB}
Let $\mathscr{C} = \mathscr{C}(A_n^d)$ be the $(d+2)$-angulated Oppermann-Thomas cluster category of Dynkin type $A_n$ with $d$ odd, and let $\mathscr{T}$ be an Oppermann-Thomas cluster tilting subcategory, see \cite[Section~6]{OppermannThomas}. Then each indecomposable object $c \in \mathscr{C}$ is uniquely determined by its index with respect to $\mathscr{T}$ up to isomorphism.
\end{theorem}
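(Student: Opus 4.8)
The plan is to deduce Theorem~\ref{thmB} from Corollary~\ref{corB}. First one checks that $\mathscr{C}=\mathscr{C}(A_n^d)$ meets the standing hypotheses there: by the Oppermann--Thomas construction \cite{OppermannThomas} it is $K$-linear, Hom-finite, $(d+2)$-angulated, has split idempotents and is $2d$-Calabi--Yau, and $d$ is odd by assumption, as required by Corollary~\ref{corB}; moreover $\mathscr{C}$ has only finitely many indecomposable objects, so every Oppermann--Thomas cluster tilting subcategory $\mathscr{T}$ is automatically of the form $\textrm{add}(T)$. It therefore suffices to verify the technical assumption: for all indecomposable $c,x\in\mathscr{C}$ the spaces $\tilthoms(c,x)$ and $\tilthoms(x,\Sigma^d(c))$ are not simultaneously non-zero. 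This is a higher analogue of the condition that a module is not the start of a short chain, as in the Auslander--Reiten--Smal\o{} theorem \cite[Theorem~IX.4.7]{ARS} recalled in the introduction. Note that the case $c\cong x$ is immediate, since the $d$-simplex corresponding to $c$ does not cross itself and hence $\homs(c,\Sigma^d c)=0$, forcing $\tilthoms(c,\Sigma^d c)=0$.

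To handle the remaining pairs I would pass to the combinatorial model of \cite[Section~6]{OppermannThomas}. With $m=n+2d+1$, the indecomposable objects of $\mathscr{C}$ are in bijection with the internal $d$-simplices of the cyclic polytope $C(m,2d)$ --- certain $(d+1)$-subsets of $\mathbb{Z}/m\mathbb{Z}$, characterised by an explicit combinatorial condition; the suspension $\Sigma$ is realised by the cyclic rotation $i\mapsto i+1$; the spaces $\homs(c,x)$ and $\homs(x,\Sigma^d c)$ are governed by explicit incidence (``crossing'' and ``separation'') conditions on the associated simplices; and an Oppermann--Thomas cluster tilting subcategory $\mathscr{T}=\textrm{add}(T)$ corresponds to a triangulation of $C(m,2d)$, so that $\Sigma^d\mathscr{T}$ corresponds to its $d$-fold rotate. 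In these terms $\homs(c,x)$ vanishes unless the two simplices ``interact'', and $\homs(x,\Sigma^d c)$ vanishes unless they cross; so the only pairs needing attention are those for which $\homs(c,x)\ne 0$ and the simplices of $c$ and $x$ cross.

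The crux is a combinatorial criterion for when $\tilthoms(c,x)=\homs(c,x)/\homs{}^{[\Sigma^d\mathscr{T}]}(c,x)$ is non-zero, equivalently for when $\tilthoms(c,x)=0$, i.e.\ when every morphism $c\to x$ factors through $\textrm{add}(\Sigma^d T)$. One route is to use that $\mathscr{C}/[\Sigma^d\mathscr{T}]$ is equivalent to the module category of the higher cluster-tilted algebra $\Gamma=\textrm{End}_{\mathscr{C}}(T)$ (whose representation theory in type $A$ again admits a combinatorial description, as in the framework of \cite{JustJorg}), rewrite $\tilthoms(c,x)$ and $\tilthoms(x,\Sigma^d(c))$ as $\Gamma$-homomorphism spaces, and transport the answer back to the cyclic polytope; a more hands-on alternative is to work inside $\mathscr{C}$ with the higher Auslander--Reiten structure, expressing a basis morphism $c\to x$ and reading off from the mesh relations whether it passes through a summand of $\Sigma^d T$. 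Either way the statement to be proved reduces to the following dichotomy for a pair $c,x$ with $\homs(c,x)\ne 0$ whose simplices cross: either every morphism $c\to x$ factors through $\Sigma^d\mathscr{T}$, or every morphism $x\to\Sigma^d c$ does. I would establish this by an interlacing analysis of the cyclic positions of the vertices of the simplices of $c$, of $x$, and of the triangulation defining $\mathscr{T}$, in the same spirit as the fact that a simplex cannot cross itself.

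I expect the main obstacle to be precisely this description of the ideal $[\Sigma^d\mathscr{T}]$: non-vanishing in the quotient is genuinely finer than non-vanishing of $\homs(c,x)$, since a non-zero morphism may still factor through $\textrm{add}(\Sigma^d T)$, and one must control which basis morphisms do so uniformly in $n$, in $d$, and in the chosen triangulation. Once ``does not factor through $\Sigma^d\mathscr{T}$'' has been pinned down as an explicit incidence condition relating the simplex of $c$, the simplex of $x$, and the walls of the triangulation, I expect the dichotomy above --- and hence the technical hypothesis of Corollary~\ref{corB}, and with it Theorem~\ref{thmB} --- to follow from the resulting bookkeeping.
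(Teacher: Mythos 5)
Your top-level reduction is the same as the paper's: check that $\mathscr{C}(A_n^d)$ satisfies the standing hypotheses and then verify that $\tilthoms{}(c,x)$ and $\tilthoms{}(x,\Sigma^d(c))$ cannot be simultaneously non-zero, so that Corollary~\ref{corB} applies. But the proposal stops exactly where the real work starts. The entire mathematical content of Theorem~\ref{thmB} beyond Corollary~\ref{corB} is this non-simultaneous-vanishing statement (Lemma~\ref{lastLemma} in the paper), and you do not prove it: you describe it as a ``dichotomy'' that you ``expect to follow from the resulting bookkeeping'' once the ideal $[\Sigma^d\mathscr{T}]$ has been described combinatorially. That is a genuine gap, not a routine verification --- and the route you sketch (fully characterising which basis morphisms lie in $[\Sigma^d\mathscr{T}]$, uniformly in the triangulation, and then deducing the dichotomy) is harder than what is actually needed.

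Two inputs close the gap. First, the ``explicit incidence condition'' you flag as the main obstacle is already available off the shelf: by \cite[Proposition~3.12]{OppermannThomas} (Proposition~\ref{propFact}), a non-zero morphism $X\to Y$ factors through an indecomposable $Z$ if and only if $Z$ admits a labelling with $x_i\le z_i\le y_i$ for all $i$. Second, the dichotomy is not proved by direct bookkeeping but by importing \cite[Lemma~6.1]{JustJorg}: if $\tilthoms{}(x,\Sigma^d(c))\ne 0$ then $\tilthoms{}(c,\Sigma^d(x))=0$. Assuming both spaces non-zero, Serre duality gives $\homs{}(c,\Sigma^d(x))\ne 0$, so this vanishing in the quotient forces, via Proposition~\ref{propFact}, the existence of $s=\{s_0,\dots,s_d\}\in\Sigma^d\mathscr{T}$ with $c_i\le s_i\le x_i^-$ for all $i$; since $x_i^-\le x_i$ compatibly with the interleaving coming from $\homs{}(c,x)\ne 0$, the same $s$ satisfies $c_i\le s_i\le x_i$, hence every morphism $c\to x$ factors through $\Sigma^d\mathscr{T}$ and $\tilthoms{}(c,x)=0$, a contradiction. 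Without this (or an equivalent) argument your proof is incomplete; with it, your framework goes through.
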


\endgroup
\section{Definitions}
We begin with some definitions. For the purpose of this paper, $K$ is an algebraically closed field.
\begin{definition}\cite[Definition~2.1]{GKO}
Let $\mathscr{C}$ be an additive category with an automorphism denoted $\Sigma^d$, where $d$ is a fixed positive integer. The inverse is denoted $\Sigma^{-d}$, but we note that $\Sigma^d$ is not assumed to be the $d$-th power of another functor. Then a \textit{$\Sigma^d$-sequence} in $\mathscr{C}$ is a diagram of the form
\begin{align}\label{theEqn}
c^0 \xrightarrow{\gamma^0} c^1 \rightarrow c^2 \rightarrow \cdots \rightarrow c^d \rightarrow c^{d+1} \xrightarrow{\gamma^{d+1}} \Sigma^d(c^0).
\end{align}
\end{definition}

\begin{definition}[{\cite[Definition~2.1]{GKO}}]
A \textit{$(d+2)$-angulated category} is a triple ($\mathscr{C}, \Sigma^d, \pentagon$) where $\pentagon$ is a class of $\Sigma^d$-sequences called $(d+2)$-angles, satisfying the following conditions:
\begin{itemize}
\item [(N1)] $\pentagon$ is closed under sums and summands, and contains the $(d+2)$-angle 
\begin{align*}
c \xrightarrow{id_c} c \rightarrow 0 \rightarrow \cdots \rightarrow 0 \rightarrow 0 \rightarrow \Sigma^d(c)
\end{align*}
for each $c \in \mathscr{C}$. For each morphism $c^0 \xrightarrow{\gamma^0} c^1$ in $\mathscr{C}$, the class $\pentagon$ contains a $\Sigma^d$-sequence of the form in Equation (\ref{theEqn}).
\item [(N2)] The $\Sigma^d$-sequence (1) is in $\pentagon$ if and only if the $\Sigma^d$-sequence
\begin{align*}
c^1 \xrightarrow{\gamma^1} c^2 \xrightarrow{\gamma^2} c^3 \rightarrow \cdots \rightarrow c^{d+1} \xrightarrow{\gamma^{d+1}} \Sigma^d(c^0) \xrightarrow{(-1)^d\Sigma^d(\gamma^0)} \Sigma^d(c^1)
\end{align*}
is in $\pentagon$. This sequence is known as the left rotation of sequence (1).
\item [(N3)] A commutative diagram with rows in $\pentagon$ has the following extension property shown with dotted arrows: \\
\begin{tikzpicture}[line cap = round, line join = round]

\node (a) at (-3, 2) {$b^0$};
\node (b) at (-1, 2) {$b^1$};
\node (c) at (1, 2) {$b^2$};
\node (d) at (3, 2) {$\ldots$};
\node (e) at (5, 2) {$b^{d}$};
\node (f) at (7, 2) {$b^{d+1}$};
\node (g) at (9, 2) {$\Sigma(b^0)$};
\node (h) at (-3, 0) {$c^0$};
\node (i) at (-1, 0) {$c^1$};
\node (j) at (1, 0) {$c^2$};
\node (k) at (3, 0) {$\ldots$};
\node (l) at (5, 0) {$c^{d}$};
\node (m) at (7, 0) {$c^{d+1}$};
\node (n) at (9, 0) {$\Sigma(c^0)$};

%Top row
\draw [->] (a) to (b);
\draw [->] (b) to (c);
\draw [->] (c) to (d);
\draw [->] (d) to (e);
\draw [->] (e) to (f);
\draw [->] (f) to (g);

%Bottom row
\draw [->] (h) to (i);
\draw [->] (i) to (j);
\draw [->] (j) to (k);
\draw [->] (k) to (l);
\draw [->] (l) to (m);
\draw [->] (m) to (n);

%Between
\draw [->] (a) to node[pos=0.5, right] {$\beta_0$} (h);
\draw [->] (b) to (i);
\draw [dotted, ->] (c) to (j);
\draw [dotted, ->] (e) to (l);
\draw [dotted, ->] (f) to (m);
\draw [->] (g) to node[pos=0.5, right] {$\Sigma(\beta_0)$} (n);

\end{tikzpicture}
\item[(N4)] The Octahedral Axiom, see \cite[Definition~2.1]{GKO}.
\end{itemize}
\end{definition}

\begin{definition}[{\cite[Definition~5.3]{OppermannThomas}}]
Let ($\mathscr{C}, \Sigma^d, \pentagon$) be a $(d+2)$-angulated category, and let $T \in \mathscr{T}$ such that $\mathscr{T} = \textrm{add}(T)$ is a full subcategory of $\mathscr{C}$. We call $T$ an \textit{Oppermann-Thomas cluster tilting object} of $\mathscr{C}$ if:
\begin{itemize}
\item[(i)] $\homs{}(\mathscr{T}, \Sigma^d(\mathscr{T})) = 0$,
\item[(ii)] for any $c \in \mathscr{C}$, there exists a $(d+2)$-angle
\begin{align}
t_d \rightarrow t_{d-1} \rightarrow \cdots \rightarrow t_1 \rightarrow t_0 \rightarrow c \rightarrow \Sigma^d(t_d) \label{theseq}
\end{align}
where $t_i \in \mathscr{T}$ for each $i$.
\end{itemize}
In this case, $\mathscr{T} = \textrm{add}(T)$ is an \textit{Oppermann-Thomas cluster tilting subcategory}.
\end{definition}

\begin{definition}[{\cite[Definitions~2.2, 2.4]{Jasso}}]
Let $\mathscr{F}$ be an additive category.
\begin{itemize}
\item[(i)] A diagram $f_{d+1} \rightarrow \cdots \rightarrow f_2 \rightarrow f_1$ is a \textit{$d$-kernel} of a morphism $f_1 \to f_0$ if the sequence
\begin{align*}
0 \to f_{d+1} \rightarrow \cdots \rightarrow f_1 \rightarrow f_0
\end{align*}
becomes exact under the functor $\textrm{Hom}_{\mathscr{F}}(f', -)$ for each $f' \in \mathscr{F}$.
\item[(ii)] A diagram $f_{d} \rightarrow f_{d-1} \rightarrow \cdots \rightarrow f_1 \rightarrow f_0$ is a \textit{$d$-cokernel} of a morphism $f_{d+1} \to f_d$ if the sequence
\begin{align*}
f_{d+1} \rightarrow \cdots \rightarrow f_1 \rightarrow f_0 \to 0
\end{align*}
becomes exact under the functor $\textrm{Hom}_{\mathscr{F}}(-, f')$ for each $f' \in \mathscr{F}$.
\item[(iii)] A \textit{$d$-exact sequence} is a diagram
\begin{align*}
f_{d+1} \to f_d \to f_{d-1} \to \cdots \to f_2 \to f_1 \to f_0
\end{align*}
such that $f_{d+1} \to f_d \to f_{d-1} \to \cdots \to f_2 \to f_1$ is a $d$-kernel of $f_1 \to f_0$ and $f_d \to f_{d-1} \to \cdots \to f_2 \to f_1 \to f_0$ is a $d$-cokernel of $f_{d+1} \to f_d$. We will often write these sequences in the form
\begin{align*}
0 \to f_{d+1} \to f_d \to f_{d-1} \to \cdots \to f_2 \to f_1 \to f_0 \to 0.
\end{align*}
\end{itemize}
\end{definition}

\begin{definition}[{\cite[Definition~3.1]{Jasso}}]
An additive category $\mathscr{F}$ is called \textit{$d$-abelian} if it has the following properties:
\begin{itemize}
\item[(A0)] $\mathscr{F}$ has split idempotents.
\item[(A1)] Each morphism in $\mathscr{F}$ has a $d$-kernel and a $d$-cokernel.
\item[(A2)] If $f_{d+1} \to f_d$ is a monomorphism with $d$-cokernel $f_d \to f_{d-1} \to \cdots \to f_2 \to f_1 \to f_0$, then
\begin{align*}
0 \to f_{d+1} \to f_d \to f_{d-1} \to \cdots \to f_2 \to f_1 \to f_0 \to 0
\end{align*}
is a $d$-exact sequence.
\item[(A2')] The dual of (A2).
\end{itemize}
\end{definition}

\begin{definition}[{\cite[Definition~2.2]{Iyama}, \cite[Definition~3.14]{Jasso}}]
Let $\mathscr{C}$ be an abelian or a triangulated category. A full subcategory $\mathscr{F}$ of $\mathscr{C}$ is a \textit{$d$-cluster tilting subcategory} if
\begin{itemize}
\item[(i)]\eqmakebox[things][l]{$\mathscr{F}$}
     $ \begin{aligned}[t]
      &= \{c \in \mathscr{C}|\textrm{Ext}_{\mathscr{C}}^{1, 2, \ldots, d-1}(\mathscr{F}, c) = 0 \} \\
&= \{c \in \mathscr{C}|\textrm{Ext}_{\mathscr{C}}^{1, 2, \ldots, d-1}(c, \mathscr{F}) = 0 \}.
      \end{aligned} $
\item[(ii)] $\mathscr{F}$ is functorially finite.
\item[(iii)] If $\mathscr{C}$ is abelian then $\mathscr{F}$ is generating and cogenerating.
\end{itemize}
\end{definition}

\begin{definition}\label{resDef}
Let $\mathscr{C}$ be an abelian category, and $\mathscr{F}$ a full subcategory. An \textit{augmented left $\mathscr{F}$-resolution} of $c \in \mathscr{C}$ is a sequence
\begin{align*}
\ldots \to f_2 \to f_1 \to f_0 \to c \to 0
\end{align*}
with $f_i \in \mathscr{F}$ for each $i$, which becomes exact under the functor $\homs{}(f, -)$ for every $f \in \mathscr{F}$. In this case, the sequence
\begin{align*}
\ldots \to f_2 \to f_1 \to f_0  \to 0 \to \ldots
\end{align*}
is called a \textit{left $\mathscr{F}$-resolution} of $c$. The right resolutions are defined dually.
\end{definition}

\begin{definition}
Let $\mathscr{C}$ be a $(d+2)$-angulated category, and let $D=\textrm{Hom}_K(-, K)$ be the usual duality functor. A \textit{Serre functor} for $\mathscr{C}$ is an auto-equivalence $S: \mathscr{C} \to \mathscr{C}$ together with a family of isomorphisms which are natural in $X$ and $Y$:
\begin{align*}
t_{X, Y}:\homs{}(Y, SX) \to D\homs{}(X, Y).
\end{align*}
We call the category $\mathscr{C}$ \textit{$2d$-Calabi-Yau} if $\mathscr{C}$ admits a Serre functor which is isomorphic to $(\Sigma^d)^2$, which we often write $\Sigma^{2d}$.
\end{definition}

The following was proven in a special case by Oppermann and Thomas \cite[Theorem~5.6]{OppermannThomas} and more generally by Jacobsen and J{$\o$}rgensen as part (i) of \cite[Theorem~0.5]{JorgJac}:
\begin{theorem}\label{JorgJac}
Suppose that ($\mathscr{C}, \Sigma^d, \pentagon$) is a $K$-linear Hom-finite $(d+2)$-angulated category with split idempotents, and that $T$ is an Oppermann-Thomas cluster tilting object such that $\mathscr{T} = \textrm{add}(T)$. Then the quotient $\frac{\mathscr{C}}{[\Sigma^d\mathscr{T}]}$ can be identified with a $d$-cluster tilting subcategory $\mathscr{D}$ of mod $\Lambda$, where $\Lambda=\textrm{End}(T)$. In particular, the quotient is $d$-abelian.
\end{theorem}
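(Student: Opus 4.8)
The plan is to realise the quotient through the representable functor and then recognise its essential image homologically. Set $\Lambda = \textrm{End}(T)$ and let $G = \homs{}(T,-)\colon \mathscr{C} \to \textrm{mod}\,\Lambda$, sending $c$ to the right $\Lambda$-module $\homs{}(T,c)$. Two inputs are immediate. First, $G$ restricts to an equivalence $\mathscr{T} \to \textrm{proj}\,\Lambda$ and, more precisely, the canonical map $\homs{}(t,c) \to \textrm{Hom}_{\Lambda}(G(t),G(c))$ is an isomorphism whenever $t \in \mathscr{T}$ (the usual Yoneda isomorphism for $\textrm{add}(T)$). Second, by part (i) of the Oppermann--Thomas axioms $\homs{}(T,\Sigma^d\mathscr{T}) = 0$, so $G$ annihilates every morphism that factors through $\Sigma^d\mathscr{T}$. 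Hence $G$ descends to a functor $\bar G\colon \frac{\mathscr{C}}{[\Sigma^d\mathscr{T}]} \to \textrm{mod}\,\Lambda$, and it suffices to show $\bar G$ is fully faithful with essential image a $d$-cluster tilting subcategory $\mathscr{D}$.

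The engine throughout is the standard fact that on a $(d+2)$-angle the functors $\homs{}(x,-)$ and $\homs{}(-,x)$ are homological, producing long exact sequences (a consequence of the axioms of \cite{GKO}). For faithfulness, suppose $f\colon c \to c'$ has $G(f) = 0$, and take the Oppermann--Thomas angle $t_d \to \cdots \to t_0 \xrightarrow{\pi} c \xrightarrow{\epsilon} \Sigma^d t_d$ of (\ref{theseq}). Since $t_0 \in \mathscr{T}$ and $G$ is faithful on morphisms with domain in $\mathscr{T}$ (the injectivity half of the Yoneda isomorphism), $G(f\pi) = G(f)G(\pi) = 0$ forces $f\pi = 0$; applying $\homs{}(-,c')$ to the angle and using exactness at $\homs{}(c,c')$, the weak-cokernel property of $\epsilon$ factors $f$ through $\epsilon$, so $f \in [\Sigma^d\mathscr{T}]$. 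For fullness, apply $G$ to the same angle: because $G(\Sigma^d t_d) = 0$, the homological property yields a projective presentation $G(t_1) \to G(t_0) \xrightarrow{G(\pi)} G(c) \to 0$. Given $\psi\colon G(c) \to G(c')$, lift $\psi\, G(\pi)$ along the Yoneda isomorphism to some $a\colon t_0 \to c'$; the composite of $a$ with $t_1 \to t_0$ dies under $G$, hence is zero, so the weak-cokernel property of $\pi$ gives $f\colon c \to c'$ with $a = f\pi$, whence $G(f) = \psi$ since $G(\pi)$ is epic. This proves $\bar G$ is fully faithful; write $\mathscr{D}$ for its essential image, so that $\bar G\colon \frac{\mathscr{C}}{[\Sigma^d\mathscr{T}]} \xrightarrow{\sim} \mathscr{D}$.

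It remains to identify $\mathscr{D} \subseteq \textrm{mod}\,\Lambda$ as $d$-cluster tilting. The projective presentation above, together with the homological property, identifies the successive syzygies of $G(c)$ with the images of the $G(t_i)$ along the angle, and tracking these is what controls the intermediate cohomology. The substance is the two Ext-equalities defining a $d$-cluster tilting subcategory: that $\textrm{Ext}^{i}_{\Lambda}(G(c),G(c')) = 0$ for $1 \le i \le d-1$ and all $c,c'$, obtained by dimension-shifting each $\textrm{Ext}^i$ into $\textrm{Hom}_{\Lambda}$ of syzygies and matching it against morphism spaces in $\mathscr{C}$ that vanish by Oppermann--Thomas axiom (i); and the converse maximality, that any $M$ with $\textrm{Ext}^{1,\ldots,d-1}_{\Lambda}(\mathscr{D},M) = 0$, respectively $\textrm{Ext}^{1,\ldots,d-1}_{\Lambda}(M,\mathscr{D}) = 0$, already lies in $\mathscr{D}$. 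Functorial finiteness of $\mathscr{D}$ follows from the $\mathscr{T}$-approximations supplied by the angles, while $\textrm{proj}\,\Lambda = G(\mathscr{T}) \subseteq \mathscr{D}$ gives generation, and cogeneration I would obtain by producing the injectives of $\mathscr{D}$ from the rotated angles. Finally the quotient is $d$-abelian because $d$-cluster tilting subcategories of module categories are $d$-abelian by \cite{Jasso}, transported across $\bar G$.

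I expect the main obstacle to be precisely this last identification, and within it the two intersection equalities. Unlike the triangulated case $d=1$ --- where the essential image is all of $\textrm{mod}\,\Lambda$ and no Ext-condition is needed --- for $d \ge 2$ there are no intermediate shifts $\Sigma^{1},\ldots,\Sigma^{d-1}$ in $\mathscr{C}$, so the groups $\textrm{Ext}^{1,\ldots,d-1}_{\Lambda}$ have no direct avatar in $\mathscr{C}$ and must be computed entirely inside $\textrm{mod}\,\Lambda$ from the syzygy structure forced by the $(d+2)$-angles; controlling them and establishing maximality is the delicate core. A second sensitive point is the cogenerating property: without a Calabi--Yau (Serre) duality to supply the injectives of $\mathscr{D}$ directly, one must exhibit them by hand from the rotated Oppermann--Thomas angles, and verifying that these genuinely exhaust $\textrm{inj}\,\Lambda$ is where the general, non-Calabi--Yau argument of \cite{JorgJac} does its real work.
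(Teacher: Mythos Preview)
The paper does not prove this theorem at all: it is stated with attribution to Oppermann--Thomas \cite[Theorem~5.6]{OppermannThomas} and Jacobsen--J{\o}rgensen \cite[Theorem~0.5(i)]{JorgJac}, and is used as a black box in the proof of Theorem~A. There is therefore no ``paper's own proof'' to compare your proposal against.

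That said, your sketch is broadly along the lines of how the cited references argue, and you have correctly located where the real work lies. The full faithfulness of $\bar G$ is routine and your argument for it is essentially complete. The hard part is exactly what you flag: verifying the two Ext-orthogonality equalities that make $\mathscr{D}$ a $d$-cluster tilting subcategory, and in particular the \emph{maximality} direction (any module Ext-orthogonal to $\mathscr{D}$ in degrees $1,\dots,d-1$ already lies in $\mathscr{D}$). Your outline for this step is only a plan, not an argument; you indicate syzygy-tracking and dimension-shifting but do not actually carry out the computation, and the maximality inclusion in particular needs a genuine construction (typically building an object of $\mathscr{C}$ that maps to a given $M$ by iterated approximation). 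Likewise the cogenerating property you mention is not resolved. So as a proof the proposal has a real gap precisely where you predicted; as a reading guide to \cite{JorgJac} it is accurate.
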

\section{Proof of Theorem A}
\begin{proposition} \label{theprop}
Let $\mathscr{C}$ be a $K$-linear, Hom-finite $(d+2)$-angulated category, and let $\mathscr{T}$ be an Oppermann-Thomas cluster tilting subcategory. Suppose that for $c \in \mathscr{C}$ we have a $(d+2)$-angle
\begin{align*}
t_d \rightarrow t_{d-1} \rightarrow \cdots \rightarrow t_1 \rightarrow t_0 \rightarrow c \rightarrow \Sigma^d(t_d)
\end{align*}
with $t_i \in \mathscr{T}$. Then for any $x \in \mathscr{C}$ the following are true:
\begin{itemize}
\item[(i)] There is an exact sequence
\begin{align*}
0 \to& \tilthoms{}(c, x) \to \homs{}(t_0, x) \to \homs{}(t_1, x) \to \\
& \cdots \to \homs{}(t_d, x) \to \homs{}^{[\mathscr{T}]}(\Sigma^{-d}(c),x) \to 0.
\end{align*}
\item[(ii)]\eqmakebox[things2][2]{}
     $\begin{aligned}[t]
      \textrm{dim}_K \tilthoms{}(c, x) + (-1)^d \textrm{dim}_K \homs{}^{[\Sigma^d \mathscr{T}]}(c,\Sigma^d(x))= \Sigma_{i=0}^{i=d} (-1)^i \textrm{dim}_K \homs{}(t_i, x).
      \end{aligned} $
%\begin{align*}
%\textrm{dim}_K \tilthoms{}(c, x) + (-1)^d \textrm{dim}_K \tilthoms{}(x, \Sigma^d(c))= \Sigma_{i=0}^{i=d} (-1)^i \textrm{dim}_K \homs{}(t_i, x).
%\end{align*}
\end{itemize}
\end{proposition}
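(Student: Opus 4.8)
The plan is to apply the contravariant functor $\homs{}(-,x)$ to the given $(d+2)$-angle and to two successive right rotations of it, and to splice the resulting sequences together. I will use throughout the standard homological property of $(d+2)$-angulated categories (see \cite{GKO}): for any $(d+2)$-angle $c^0\to c^1\to\cdots\to c^{d+1}\to\Sigma^d(c^0)$ the induced sequence
\[
\homs{}(\Sigma^d(c^0),x)\to\homs{}(c^{d+1},x)\to\cdots\to\homs{}(c^1,x)\to\homs{}(c^0,x)
\]
is exact at each of the interior terms $\homs{}(c^{d+1},x),\dots,\homs{}(c^1,x)$. By (N2), the given $(d+2)$-angle is the left rotation of a $(d+2)$-angle $B$, which is in turn the left rotation of a $(d+2)$-angle $C$; explicitly (signs suppressed, as they play no role below)
\[
B:\ \Sigma^{-d}(c)\xrightarrow{\,g\,}t_d\to\cdots\to t_1\to t_0\xrightarrow{\,\psi\,}c,\qquad
C:\ \Sigma^{-d}(t_0)\to\Sigma^{-d}(c)\xrightarrow{\,g\,}t_d\to\cdots\to t_1\to t_0,
\]
where $\psi\colon t_0\to c$ is the map occurring in the original angle.

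Applying $\homs{}(-,x)$ to the original angle and to $B$ and combining the exactness statements, the sequence
\[
\homs{}(c,x)\xrightarrow{\,\psi^*\,}\homs{}(t_0,x)\to\homs{}(t_1,x)\to\cdots\to\homs{}(t_d,x)\xrightarrow{\,g^*\,}\homs{}(\Sigma^{-d}(c),x)
\]
is exact at every term $\homs{}(t_i,x)$ and at $\homs{}(c,x)$. So it remains to show that $\psi^*$ has image $\tilthoms{}(c,x)$ inside $\homs{}(t_0,x)$ and that $g^*$ has image $\homs{}^{[\mathscr{T}]}(\Sigma^{-d}(c),x)$, since then
\[
0\to\mathrm{im}(\psi^*)\to\homs{}(t_0,x)\to\cdots\to\homs{}(t_d,x)\to\mathrm{im}(g^*)\to 0
\]
is exactly the sequence asserted in (i). For the left end, exactness at $\homs{}(c,x)$ of $\homs{}(-,x)$ applied to the original angle identifies $\ker(\psi^*)$ with the maps $c\to x$ that factor through $c\to\Sigma^d(t_d)$, so $\ker(\psi^*)\subseteq\homs{}^{[\Sigma^d\mathscr{T}]}(c,x)$; conversely a map $c\xrightarrow{u}\Sigma^d(t)\xrightarrow{v}x$ with $t\in\mathscr{T}$ satisfies $u\psi\in\homs{}(\mathscr{T},\Sigma^d\mathscr{T})=0$, so $vu\in\ker(\psi^*)$. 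Hence $\ker(\psi^*)=\homs{}^{[\Sigma^d\mathscr{T}]}(c,x)$ and $\mathrm{im}(\psi^*)\cong\homs{}(c,x)/\homs{}^{[\Sigma^d\mathscr{T}]}(c,x)=\tilthoms{}(c,x)$. For the right end, $\mathrm{im}(g^*)\subseteq\homs{}^{[\mathscr{T}]}(\Sigma^{-d}(c),x)$ is clear since $g^*$ is precomposition with $g\colon\Sigma^{-d}(c)\to t_d$ and $t_d\in\mathscr{T}$; for the reverse inclusion, given $h\colon\Sigma^{-d}(c)\to x$ factoring as $\Sigma^{-d}(c)\xrightarrow{a}t\xrightarrow{b}x$ with $t\in\mathscr{T}$, I apply $\homs{}(-,t)$ to $C$: exactness at $\homs{}(\Sigma^{-d}(c),t)$ together with $\homs{}(\Sigma^{-d}(t_0),t)\cong\homs{}(t_0,\Sigma^d(t))=0$ forces $g^*\colon\homs{}(t_d,t)\to\homs{}(\Sigma^{-d}(c),t)$ to be surjective, so $a=a'g$ for some $a'\colon t_d\to t$ and $h=(ba')g\in\mathrm{im}(g^*)$. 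This proves (i).

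Part (ii) then follows formally: by Hom-finiteness the exact sequence of (i) consists of finite-dimensional $K$-vector spaces, so the alternating sum of its dimensions vanishes; collecting the middle terms gives
\[
\dim_K\tilthoms{}(c,x)+(-1)^d\dim_K\homs{}^{[\mathscr{T}]}(\Sigma^{-d}(c),x)=\sum_{i=0}^{d}(-1)^i\dim_K\homs{}(t_i,x),
\]
and since $\Sigma^d$ is an autoequivalence it restricts to an isomorphism $\homs{}^{[\mathscr{T}]}(\Sigma^{-d}(c),x)\xrightarrow{\ \sim\ }\homs{}^{[\Sigma^d\mathscr{T}]}(c,\Sigma^d(x))$, which converts the displayed equality into the stated one.

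The main obstacle is the identification of the two outer terms of the sequence in (i): one must show that the relevant kernel and cokernel are \emph{exactly}, not merely contained in, the spaces of morphisms factoring through $\Sigma^d\mathscr{T}$ and through $\mathscr{T}$ respectively. This is precisely where the vanishing $\homs{}(\mathscr{T},\Sigma^d\mathscr{T})=0$ from the definition of an Oppermann--Thomas cluster tilting subcategory enters, applied through the rotated $(d+2)$-angles $B$ and $C$; the remaining steps (the middle exactness and the Euler-characteristic computation in (ii)) are routine.
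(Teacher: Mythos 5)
Your proof is correct and follows essentially the same route as the paper's: rotate the $(d+2)$-angle, apply $\homs{}(-,x)$ to get a long exact sequence, identify the kernel at $\homs{}(t_0,x)$ with $\tilthoms{}(c,x)$ and the image in $\homs{}(\Sigma^{-d}(c),x)$ with $\homs{}^{[\mathscr{T}]}(\Sigma^{-d}(c),x)$ using $\homs{}(\mathscr{T},\Sigma^d\mathscr{T})=0$, then take the alternating sum of dimensions. The only cosmetic difference is at the right end, where you lift the factorisation by applying $\homs{}(-,t)$ to the second rotation $C$, whereas the paper argues directly inside the long exact sequence in $x$; the two arguments use the same vanishing and are interchangeable.
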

Before the proof we show a lemma:
\begin{lemma}\label{theLemma}
Suppose we have $\mathscr{C, T}$ as in Proposition \ref{theprop}, and we have the same $(d+2)$-angle. There is an exact sequence
\begin{align*}
0 \to& \tilthoms{}(c, x) \to \homs{}(t_0, x) \to \homs{}(t_1, x) \to \cdots \to \homs{}(t_d, x).
\end{align*}
\end{lemma}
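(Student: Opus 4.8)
\section*{Proof proposal for Lemma \ref{theLemma}}

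The plan is to apply the cohomological functor $\homs{}(-,x)$ to the given $(d+2)$-angle and then re-identify the first nonzero term. Recall from \cite[Proposition~2.5]{GKO} that $\homs{}(-,x)$ turns $(d+2)$-angles into long exact sequences of $K$-vector spaces, and that by combining this with the rotation axiom (N2) one may extend the exact sequence arbitrarily far in each direction. Applying this to
\[
t_d \to t_{d-1} \to \cdots \to t_1 \to t_0 \to c \xrightarrow{\ \gamma\ } \Sigma^d(t_d),
\]
I obtain an exact sequence
\[
\homs{}(\Sigma^d(t_d),x) \xrightarrow{\ \gamma^\ast\ } \homs{}(c,x) \to \homs{}(t_0,x) \to \homs{}(t_1,x) \to \cdots \to \homs{}(t_d,x),
\]
in which the maps after $\gamma^\ast$ are precomposition with the maps of the $(d+2)$-angle, and which is exact at $\homs{}(c,x)$ and at each of $\homs{}(t_0,x),\dots,\homs{}(t_{d-1},x)$. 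In particular $\ker\bigl(\homs{}(c,x)\to\homs{}(t_0,x)\bigr)=\operatorname{im}\gamma^\ast$, and $\operatorname{im}\bigl(\homs{}(c,x)\to\homs{}(t_0,x)\bigr)=\ker\bigl(\homs{}(t_0,x)\to\homs{}(t_1,x)\bigr)$.

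Next I would show $\operatorname{im}\gamma^\ast=\homs{}^{[\Sigma^d\mathscr{T}]}(c,x)$, the subspace of morphisms $c\to x$ factoring through an object of $\Sigma^d\mathscr{T}$. One inclusion is immediate: every element of $\operatorname{im}\gamma^\ast$ is of the form $c\xrightarrow{\gamma}\Sigma^d(t_d)\to x$, hence factors through $\Sigma^d(t_d)\in\Sigma^d\mathscr{T}$. For the reverse inclusion, suppose $f\colon c\to x$ factors as $c\xrightarrow{g}\Sigma^d(t)\xrightarrow{h}x$ with $t\in\mathscr{T}$. The composite of $g$ with $t_0\to c$ lies in $\homs{}(t_0,\Sigma^d(t))\subseteq\homs{}(\mathscr{T},\Sigma^d\mathscr{T})=0$ by the Oppermann–Thomas rigidity axiom; applying $\homs{}(-,\Sigma^d(t))$ to the same $(d+2)$-angle and using exactness at $\homs{}(c,\Sigma^d(t))$, I conclude $g=g'\gamma$ for some $g'\colon\Sigma^d(t_d)\to\Sigma^d(t)$, so $f=(hg')\gamma\in\operatorname{im}\gamma^\ast$. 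Hence $\ker\bigl(\homs{}(c,x)\to\homs{}(t_0,x)\bigr)=\homs{}^{[\Sigma^d\mathscr{T}]}(c,x)$.

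Finally, by definition of the ideal quotient $\frac{\mathscr{C}}{[\Sigma^d\mathscr{T}]}$ one has $\tilthoms{}(c,x)=\homs{}(c,x)/\homs{}^{[\Sigma^d\mathscr{T}]}(c,x)$, so the map $\homs{}(c,x)\to\homs{}(t_0,x)$ descends to an injection $\tilthoms{}(c,x)\hookrightarrow\homs{}(t_0,x)$ with the same image, namely $\ker\bigl(\homs{}(t_0,x)\to\homs{}(t_1,x)\bigr)$. Splicing this in front of the exact sequence of the first paragraph yields
\[
0 \to \tilthoms{}(c,x) \to \homs{}(t_0,x) \to \homs{}(t_1,x) \to \cdots \to \homs{}(t_d,x),
\]
as required. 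The only genuinely delicate point is the reverse inclusion in the middle paragraph: one must upgrade ``$f$ factors through \emph{some} object of $\Sigma^d\mathscr{T}$'' to ``$f$ factors through the \emph{specific} morphism $\gamma$ of the chosen $(d+2)$-angle'', and this is exactly where the rigidity $\homs{}(\mathscr{T},\Sigma^d\mathscr{T})=0$ enters.
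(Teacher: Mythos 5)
Your proof is correct and follows essentially the same route as the paper: apply $\homs{}(-,x)$ to the rotated $(d+2)$-angle and identify the kernel of $\homs{}(c,x)\to\homs{}(t_0,x)$ with $\homs{}^{[\Sigma^d\mathscr{T}]}(c,x)$ using $\homs{}(\mathscr{T},\Sigma^d\mathscr{T})=0$. Your reverse inclusion explicitly lifts $g$ through $\gamma$, whereas the paper just notes $\mu\circ\delta_0=0$ and invokes exactness at $\homs{}(c,x)$; these are equivalent.
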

\begin{proof}
We start with the $(d+2)$-angle as stated in Proposition \ref{theprop}. Due to the second axiom of $(d+2)$-angulated categories, we have a longer sequence
\begin{align*}
\cdots \to \Sigma^{-d}(t_0) \to \Sigma^{-d}(c) \to t_d \rightarrow t_{d-1} \rightarrow \cdots& \rightarrow t_1 \rightarrow t_0 \rightarrow c \rightarrow \Sigma^d(t_d) \rightarrow \Sigma^d(t_{d-1}) \to \cdots&
\end{align*}
which becomes exact under the action of the functor $\homs{}(-, x)$ for $x \in \mathscr{C}$ by \cite[Proposition~2.5(a)]{GKO}. This gives us a long exact sequence
\begin{align*}
\cdots \to& \homs{}(\Sigma^d(t_{d-1}), x) \to \homs{}(\Sigma^d(t_d), x) \to \homs{}(c, x) \to \\
\cdots \to& \homs{}(t_{d-1}, x) \to \homs{}(t_d, x) \to \homs{}(\Sigma^{-d}(c), x) \to \cdots.
\end{align*}
In particular, there is an exact sequence
\begin{align*}
\homs{}(\Sigma^d(t_d), x) \to \homs{}(c, x) \to 
\cdots \to \homs{}(t_{d-1}, x) \to \homs{}(t_d, x).
\end{align*}

We examine this sequence more closely. By labelling the maps in the $(d+2)$-angle:
\begin{align*}
\ldots \to t_1 \xrightarrow{\delta_1} t_0 \xrightarrow{\delta_0} c \xrightarrow{\phi} \Sigma^d(t_d) \to \ldots,
\end{align*}
we obtain labels for our sequence:
\begin{align*}
\ldots \to \homs{}(\Sigma^d(t_d), x) \xrightarrow{\phi^*} \homs{}(c, x) \xrightarrow{\delta_0^*} \homs{}(t_0, x) \xrightarrow{\delta_1^*} \homs{}(t_1, x) \to \ldots.
\end{align*}

To establish the lemma, it is enough to prove that $\textrm{Coker}(\phi^*) \cong \tilthoms{}(c, x)$. This is equivalent to $\textrm{Ker}(\delta_0^*)$ being equal to $\textrm{Hom}_{\mathscr{C}}^{[\Sigma^d\mathscr{T}]}(c, x)$, which is what we will aim to show.

By exactness, $\textrm{Ker}(\delta_0^*) = \textrm{Im}(\phi^*)$, so we have immediately that $\textrm{Ker}(\delta_0^*) \subseteq \textrm{Hom}_{\mathscr{C}}^{[\Sigma^d\mathscr{T}]}(c, x)$. Take any $\mu \in \textrm{Hom}_{\mathscr{C}}^{[\Sigma^d\mathscr{T}]}(c, x)$. Then as $\textrm{Hom}(\mathscr{T}, \Sigma^d \mathscr{T}) = 0$, we know that $\mu \circ \delta_0 = 0$. This means that $\mu \in \textrm{Ker}(\delta_0^*)$. This gives us that $\textrm{Hom}_{\mathscr{C}}^{[\Sigma^d\mathscr{T}]}(c, x) \subseteq \textrm{Ker}(\delta_0^*)$, and so we have proven the equality.\\
\end{proof}
Since $\homs{}(-, x)$ and $\tilthoms{}(-,x)$ are equivalent on $\mathscr{T}$, we see that this result can actually be restated in the following way:
\begin{remark}
Suppose we have $\mathscr{C, T}$ as in Proposition \ref{theprop}, and we have the same $(d+2)$-angle. Then in $\frac{\mathscr{C}}{[\Sigma^d \mathscr{T}]}$, the sequence
\begin{align*}
t_{d-1} \to t_{d-2} \to \cdots \to t_0 \to c
\end{align*}
is a $d$-cokernel for the morphism $t_d \to t_{d-1}$.
\end{remark}

We now prove Proposition \ref{theprop}.
\begin{proof}
Part (i): \\
We have the exact sequence from Lemma \ref{theLemma}. It remains to prove that there is also an exact sequence 
\begin{align*}
&\homs{}(t_0, x) \to \homs{}(t_1, x) \to \cdots \to \homs{}(t_d, x) \to \homs{}^{[\mathscr{T}]}(\Sigma^{-d}(c),x) \to 0.
\end{align*}
As in the proof of the lemma, we have the long exact sequence
\begin{align*}
\cdots \to& \homs{}(\Sigma^d(t_{d-1}), x) \to \homs{}(\Sigma^d(t_d), x) \to \homs{}(c, x) \to \\
\cdots \to& \homs{}(t_d, x) \to \homs{}(\Sigma^{-d}(c), x) \to \homs{}(\Sigma^{-d}(t_0), x) \to \cdots.
\end{align*}
In particular, there is an exact sequence
\begin{align*}
\homs{}(t_0, x)  \xrightarrow{\delta_1^*} \cdots \xrightarrow{\delta_{d}^*}  \homs{}(t_d, x) \xrightarrow{(\Sigma^{-d} \phi)^*}  \homs{}(\Sigma^{-d}(c), x) \xrightarrow{(\Sigma^{-d} \delta_0)^*} \homs{}(\Sigma^{-d}(t_0), x).
\end{align*}
We have used labels similar to those in the proof of Lemma \ref{theLemma}.
To establish the proposition, it is enough to prove that $\textrm{Ker}((\Sigma^{-d} \delta_0)^*) = \homs{}^{[\mathscr{T}]}(\Sigma^{-d}(c),x)$.\\

Take $\theta \in \homs{}(\Sigma^{-d}(c), x)$. Firstly, suppose that $\theta \in \homs{}^{[\mathscr{T}]}(\Sigma^{-d}(c),x)$. Then $(\Sigma^{-d} \delta_0)^*(\theta) = \theta \circ \Sigma^{-d}\delta_0 = 0$, or we would have a non-zero morphism from $\Sigma^{-d}\mathscr{T}$ to $\mathscr{T}$ which contradicts the definition of $\mathscr{T}$. So we have that $\homs{}^{[\mathscr{T}]}(\Sigma^{-d}(c),x) \subseteq \textrm{Ker}((\Sigma^{-d} \delta_0)^*)$.\\

Now suppose that $\theta \in \textrm{Ker}((\Sigma^{-d} \delta_0)^*)$. By exactness, this means that $\theta \in \textrm{Im}((\Sigma^{-d}\phi)^*)$. That is, $\theta = \theta' \circ \Sigma^{-d}\phi$ for some $\theta': t_d \to x$. Then obviously $\theta \in \homs{}^{[\mathscr{T}]}(\Sigma^{-d}(c),x)$. This gives us that $\textrm{Ker}((\Sigma^{-d} \delta_0)^*) \subseteq \homs{}^{[\mathscr{T}]}(\Sigma^{-d}(c),x)$. Then we have the required equality, which proves part(i).

Part (ii):
We now have an exact sequence
\begin{align*}
0 \to& \tilthoms{}(c, x) \to \homs{}(t_0, x) \to \homs{}(t_1, x) \to \\
& \cdots \to \homs{}(t_d, x) \to \homs{}^{[\mathscr{T}]}(\Sigma^{-d}(c),x) \to 0.
\end{align*}

This exact sequence gives us the equation in part (ii) by the Rank-Nullity theorem, given that $\textrm{dim}_K\homs{}^{[\Sigma^d\mathscr{T}]}(c,\Sigma^d(x)) = \textrm{dim}_K \homs{}^{[\mathscr{T}]}(\Sigma^{-d}(c),x)$.
\end{proof}

We can extract more information from this set of results in the case that $\mathscr{C}$ is a $2d$-Calabi-Yau category.
\begin{proposition}\label{restateProp}
Let $\mathscr{C}$ be a $K$-linear, Hom-finite, $2d$-Calabi-Yau $(d+2)$-angulated category, and let $\mathscr{T}$ be an Oppermann-Thomas cluster tilting subcategory. Then there are isomorphisms
\begin{align*}
\homs{}^{[\Sigma^d \mathscr{T}]}(c,\Sigma^d(x)) \cong D\tilthoms{}(x, \Sigma^d(c))
\end{align*}
for any two objects $c, x \in \mathscr{C}$.
\end{proposition}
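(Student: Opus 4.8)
The plan is to peel off the two "factoring-through" subspaces from both sides using a Serre-duality argument, reducing the claim to a statement about the full Hom-spaces and the images of the relevant composition maps. Concretely, recall that $\homs{}^{[\Sigma^d\mathscr{T}]}(c,\Sigma^d(x))$ is by definition the subspace of $\homs{}(c,\Sigma^d(x))$ consisting of morphisms that factor through an object of $\Sigma^d\mathscr{T}$, and similarly $\tilthoms{}(x,\Sigma^d(c))$ is the quotient $\homs{}(x,\Sigma^d(c))/\homs{}^{[\Sigma^d\mathscr{T}]}(x,\Sigma^d(c))$. The $2d$-Calabi--Yau property gives a natural isomorphism $\homs{}(c,\Sigma^d(x)) \cong \homs{}(c, S(\Sigma^{-d}x)) \cong D\homs{}(\Sigma^{-d}(x), c)$, and after applying the autoequivalence $\Sigma^d$ to both arguments this becomes $\homs{}(c,\Sigma^d(x)) \cong D\homs{}(x, \Sigma^d(c))$. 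So there is already a perfect pairing between $\homs{}(c,\Sigma^d(x))$ and $\homs{}(x,\Sigma^d(c))$; the content of the proposition is that under this pairing the subspace $\homs{}^{[\Sigma^d\mathscr{T}]}(c,\Sigma^d(x))$ is exactly the annihilator of $\homs{}^{[\Sigma^d\mathscr{T}]}(x,\Sigma^d(c))$, so that the subspace on one side is dual to the quotient on the other.

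First I would fix the natural isomorphism $t \colon \homs{}(c,\Sigma^d(x)) \xrightarrow{\sim} D\homs{}(x,\Sigma^d(c))$ coming from the Serre functor $S \cong \Sigma^{2d}$, being careful about where the shifts go (apply $\Sigma^d$ to the arguments of the defining isomorphism $\homs{}(Y,SX)\cong D\homs{}(X,Y)$ with $X = \Sigma^{-d}(x)$, $Y = c$). Next, using naturality of $t$, I would show that a morphism $f \colon c \to \Sigma^d(x)$ factors through some $\Sigma^d(t_0)$ with $t_0 \in \mathscr{T}$ if and only if the functional $t(f)$ vanishes on every $g \colon x \to \Sigma^d(c)$ that factors through some $\Sigma^d(t_1)$, $t_1 \in \mathscr{T}$. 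The ``only if'' direction is the easy one: if $f = \beta\alpha$ with $\alpha \colon c \to \Sigma^d(t_0)$, $\beta \colon \Sigma^d(t_0)\to\Sigma^d(x)$, and $g = \delta\gamma$ with $\gamma \colon x \to \Sigma^d(t_1)$, $\delta \colon \Sigma^d(t_1) \to \Sigma^d(c)$, then naturality rewrites the pairing $\langle t(f),g\rangle$ in terms of a morphism $\Sigma^d(t_0) \to \Sigma^{2d}(t_1)$, i.e.\ (after untwisting) an element of $\homs{}(t_0,\Sigma^d(t_1)) = 0$ by the defining property (i) of an Oppermann--Thomas cluster tilting subcategory; hence the pairing is zero. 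The ``if'' direction requires a dimension count: I would use Theorem \ref{JorgJac} to identify $\frac{\mathscr{C}}{[\Sigma^d\mathscr{T}]}$ with a $d$-cluster tilting subcategory $\mathscr{D}$ of $\operatorname{mod}\Lambda$, compute both $\dim_K \homs{}^{[\Sigma^d\mathscr{T}]}(c,\Sigma^d(x))$ and $\dim_K \homs{}^{[\Sigma^d\mathscr{T}]}(x,\Sigma^d(c))$ as $\dim_K\homs{}(c,\Sigma^d(x)) - \dim_K\tilthoms{}(c,\Sigma^d(x))$ etc., and invoke the perfect pairing $\homs{}(c,\Sigma^d(x)) \cong D\homs{}(x,\Sigma^d(c))$ to conclude that the two ``factoring'' subspaces have complementary dimensions, so that the inclusion of one into the annihilator of the other is forced to be an equality.

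The main obstacle I expect is bookkeeping the shifts and signs so that the naturality square for $t$ really does turn ``$f$ factors through $\Sigma^d\mathscr{T}$'' into an element of $\homs{}(\mathscr{T},\Sigma^d\mathscr{T})$ rather than some other Hom-space; getting the direction of the functors right (and making sure the autoequivalence $\Sigma^d$ is applied consistently on both arguments) is the delicate point. A secondary subtlety is justifying that $\dim_K\tilthoms{}(c,\Sigma^d(x)) = \dim_K\tilthoms{}(x,\Sigma^d(c))$, or equivalently that the Serre pairing descends to a perfect pairing on the quotient categories; this should follow once the ``only if'' direction is established, since it shows the pairing kills $\homs{}^{[\Sigma^d\mathscr{T}]}$ on each side, and then a final rank argument closes the loop. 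Once these dimension identities and the one easy inclusion are in hand, the isomorphism $\homs{}^{[\Sigma^d\mathscr{T}]}(c,\Sigma^d(x)) \cong D\tilthoms{}(x,\Sigma^d(c))$ drops out, and combined with Proposition \ref{theprop}(ii) this is exactly the reformulation needed for the proof of Theorem \ref{theRmk}.
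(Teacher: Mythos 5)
Your overall strategy --- realise both sides inside the Serre pairing $\homs{}(c,\Sigma^d(x)) \cong D\homs{}(x,\Sigma^d(c))$ and show that $\homs{}^{[\Sigma^d \mathscr{T}]}(c,\Sigma^d(x))$ is exactly the annihilator of $\homs{}^{[\Sigma^d \mathscr{T}]}(x,\Sigma^d(c))$ --- is a fair reformulation of the statement, and your ``only if'' direction (the pairing vanishes on the two factoring subspaces because $\homs{}(\mathscr{T},\Sigma^d\mathscr{T})=0$) is correct. The gap is in the converse. Write $V_1=\homs{}^{[\Sigma^d \mathscr{T}]}(c,\Sigma^d(x))$, $V_2=\homs{}^{[\Sigma^d \mathscr{T}]}(x,\Sigma^d(c))$, and $n$ for the common dimension of the two full Hom-spaces. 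The easy direction gives only $V_1\subseteq V_2^{\perp}$, i.e.\ $\dim V_1+\dim V_2\le n$; the ``final rank argument'' you invoke cannot close the loop, because knowing that a perfect pairing kills $V_1\times V_2$ says nothing about the reverse inequality (consider $V_1=V_2=0$ with $n>0$). The equality $\dim V_1+\dim V_2=n$ is essentially the proposition itself, so asserting that the two factoring subspaces ``have complementary dimensions'' is circular, and you give no actual mechanism for extracting it from Theorem \ref{JorgJac}: identifying $\frac{\mathscr{C}}{[\Sigma^d\mathscr{T}]}$ with a $d$-cluster tilting subcategory of $\mathrm{mod}\,\Lambda$ computes $\tilthoms{}$-spaces but does not by itself relate $\dim V_1$ to $\dim V_2$.

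The missing ingredient is the defining $(d+2)$-angle $t_d\to\cdots\to t_0\xrightarrow{\phi}c\xrightarrow{\delta}\Sigma^d(t_d)$ and the long exact sequence it induces under $\homs{}(-,\Sigma^d(x))$. Exactness together with $\homs{}(\mathscr{T},\Sigma^d\mathscr{T})=0$ identifies $V_1$ with $\mathrm{Im}(\delta^*)$, where $\delta^*\colon\homs{}(\Sigma^d(t_d),\Sigma^d(x))\to\homs{}(c,\Sigma^d(x))$. Dualising and using naturality of the Serre functor turns $D\,\mathrm{Im}(\delta^*)$ into $\mathrm{Im}((\Sigma^d\delta)_*)$, which by exactness of the dual sequence equals $\homs{}(x,\Sigma^d(c))/\mathrm{Im}((\Sigma^d\phi)_*)=\homs{}(x,\Sigma^d(c))/V_2=\tilthoms{}(x,\Sigma^d(c))$. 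This is how the paper proceeds; it yields the isomorphism directly (with the complementary-dimension identity as a byproduct) and needs no annihilator or rank argument. If you want to keep your pairing formulation, you must still carry out this long-exact-sequence computation, or an equivalent one, to obtain the inequality $\dim V_1\ge n-\dim V_2$.
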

\begin{proof}
By definition, we have a $(d+2)$-angle
\begin{align*}
t_d \to \ldots \to t_0 \xrightarrow{\phi} c \xrightarrow{\delta} \Sigma^d (t_d)
\end{align*}
with each $t_i \in \mathscr{T}$. We can use this to give us a long exact sequence
\begin{align}\label{aseq}
\ldots \to \homs{}(\Sigma^d(t_d),\Sigma^d(x)) \xrightarrow{\delta^*} \homs{}(c,\Sigma^d(x)) \xrightarrow{\phi^*} \homs{}(t_0, \Sigma^d(x)) \to \ldots.
\end{align}
We will show first that $\textrm{Im}(\delta^*) = \homs{}^{[\Sigma^d \mathscr{T}]}(c,\Sigma^d(x))$. \\

It is clear that $\textrm{Im}(\delta^*) \subseteq \homs{}^{[\Sigma^d \mathscr{T}]}(c,\Sigma^d(x))$ as all such maps must factor through $\Sigma^d(t_d)$. Now, take $\theta \in \homs{}^{[\Sigma^d \mathscr{T}]}(c,\Sigma^d(x))$. Then $\phi^*(\theta) = 0$, or we would have a non-zero map from $\mathscr{T}$ to $\Sigma^d \mathscr{T}$. So $\theta \in \textrm{Ker}(\phi^*)$, and by exactness $\theta \in \textrm{Im}(\delta^*)$. This gives the desired equality. \\

We look again at the long exact sequence (\ref{aseq}). We dualise the sequence to obtain the long exact sequence
\begin{align*}
\ldots \to D\homs{}(t_0, \Sigma^d(x)) \xrightarrow{D\phi^*} D\homs{}(c,\Sigma^d(x)) \xrightarrow{D\delta^*} D\homs{}(\Sigma^d(t_d),\Sigma^d(x)) \to \ldots.
\end{align*}
We may apply the Serre duality to obtain the sequence
\begin{align*}
\ldots \to \homs{}(x, \Sigma^d(t_0)) \xrightarrow{(\Sigma^d \phi)_*} \homs{}(x, \Sigma^d(c)) \xrightarrow{(\Sigma^d \delta)_*} \homs{}(x, \Sigma^{2d}(t_d)) \to \ldots
\end{align*}
where due to the Serre duality being natural, we know that $\textrm{Im}((\Sigma^d \delta)_*) \cong D \textrm{Im}(\delta^*)$. \\

We wish to examine $\textrm{Im}((\Sigma^d \delta)_*)$. By exactness, $\textrm{Im}((\Sigma^d \delta)_*) \cong \frac{\homs{}(x, \Sigma^d(c))}{\textrm{Im}((\Sigma^d \phi)_*)}.$ We claim that $\textrm{Im}((\Sigma^d \phi)_*) = \homs{}^{[\Sigma^d \mathscr{T}]}(x, \Sigma^d(c))$.

The inclusion $\textrm{Im}((\Sigma^d \phi)_*) \subseteq \homs{}^{[\Sigma^d \mathscr{T}]}(x, \Sigma^d(c))$ is clear. To see the other inclusion, take $\theta \in \homs{}^{[\Sigma^d \mathscr{T}]}(x, \Sigma^d(c))$. Then $(\Sigma^d \delta)_*(\theta) = 0$ or we would have a non-zero map from $\Sigma^d \mathscr{T}$ to $\Sigma^{2d} \mathscr{T}$ which is a contradiction. Then by exactness, $\theta \in \textrm{Im}((\Sigma^d \phi)_*)$ and we have proven the equality. This means that $\textrm{Im}((\Sigma^d \delta)_*) \cong \tilthoms{}(x, \Sigma^d(c))$, and as such that $D \textrm{Im}(\delta^*) \cong \tilthoms{}(x, \Sigma^d(c))$.
Then we have that
\begin{align*}
\homs{}^{[\Sigma^d \mathscr{T}]}(c,\Sigma^d(x)) &= \textrm{Im}(\delta^*) \\
&\cong D\tilthoms{}(x, \Sigma^d(c))
\end{align*}
as required.
\end{proof}

\begin{corollary}\label{thepropCY}
Let $\mathscr{C}$ be a $K$-linear, Hom-finite, $2d$-Calabi-Yau $(d+2)$-angulated category, and let $\mathscr{T}$ be an Oppermann-Thomas cluster tilting subcategory. Suppose that for $c \in \mathscr{C}$ we have a $(d+2)$-angle
\begin{align*}
t_d \rightarrow t_{d-1} \rightarrow \cdots \rightarrow t_1 \rightarrow t_0 \rightarrow c \rightarrow \Sigma^d(t_d)
\end{align*}
with $t_i \in \mathscr{T}$. Then for any $x \in \mathscr{C}$
\begin{align*}
\textrm{dim}_K \tilthoms{}(c, x) + (-1)^d \textrm{dim}_K \tilthoms{}(x,\Sigma^d(c))= \Sigma_{i=0}^{i=d} (-1)^i \textrm{dim}_K \homs{}(t_i, x).
\end{align*}
\end{corollary}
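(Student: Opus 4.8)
The plan is to derive this identity by combining the dimension formula of Proposition~\ref{theprop}(ii) with the $2d$-Calabi-Yau duality of Proposition~\ref{restateProp}; no new argument beyond these two results should be needed. First I would apply Proposition~\ref{theprop}(ii) to the given $(d+2)$-angle
\begin{align*}
t_d \rightarrow t_{d-1} \rightarrow \cdots \rightarrow t_1 \rightarrow t_0 \rightarrow c \rightarrow \Sigma^d(t_d),
\end{align*}
which produces the equation
\begin{align*}
\textrm{dim}_K \tilthoms{}(c, x) + (-1)^d \textrm{dim}_K \homs{}^{[\Sigma^d \mathscr{T}]}(c,\Sigma^d(x)) = \Sigma_{i=0}^{i=d} (-1)^i \textrm{dim}_K \homs{}(t_i, x).
\end{align*}
The entire task then reduces to rewriting the second summand on the left-hand side.

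Next I would observe that, because $\mathscr{C}$ is Hom-finite, each of $\homs{}^{[\Sigma^d \mathscr{T}]}(c,\Sigma^d(x))$ and $\tilthoms{}(x,\Sigma^d(c))$ sits inside a finite-dimensional $K$-vector space and is therefore itself finite-dimensional, so the duality functor $D=\textrm{Hom}_K(-,K)$ acts dimension-preservingly on them. Proposition~\ref{restateProp} supplies an isomorphism $\homs{}^{[\Sigma^d \mathscr{T}]}(c,\Sigma^d(x)) \cong D\tilthoms{}(x,\Sigma^d(c))$, whence
\begin{align*}
\textrm{dim}_K \homs{}^{[\Sigma^d \mathscr{T}]}(c,\Sigma^d(x)) = \textrm{dim}_K \tilthoms{}(x,\Sigma^d(c)).
\end{align*}
Substituting this into the displayed equation from Proposition~\ref{theprop}(ii) yields precisely the assertion of the corollary.

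I do not expect a genuine obstacle here: both inputs are already established, and the only point deserving a sentence of justification is the finite-dimensionality needed to guarantee that dualising preserves $K$-dimension, which follows immediately from the Hom-finiteness hypothesis on $\mathscr{C}$.
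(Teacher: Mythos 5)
Your proposal is correct and is exactly the argument the paper intends: the corollary is stated immediately after Proposition~\ref{restateProp} precisely so that it follows by substituting the dimension equality $\textrm{dim}_K \homs{}^{[\Sigma^d \mathscr{T}]}(c,\Sigma^d(x)) = \textrm{dim}_K \tilthoms{}(x,\Sigma^d(c))$ into Proposition~\ref{theprop}(ii). Your added remark on Hom-finiteness guaranteeing that $D$ preserves dimension is a sensible (if routine) point of care.
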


We have already defined the index in a triangulated category in definition \ref{indDef}. We may extend this definition to $(d+2)$-angulated categories. We see that if we have an Oppermann-Thomas cluster tilting category $\mathscr{T}$, we can construct the split Grothendieck group $K_0^{\textrm{Split}}(\mathscr{T})$ in the same way as in the triangulated case. Then we define the index as the following:

\begin{definition}[{\cite[Definition~B]{JustJorg}}]\label{higherIndDef}
Let $\mathscr{C}$ be a $K$-linear Hom-finite $(d+2)$-angulated category with split idempotents. The \textit{index} of an object $c \in \mathscr{C}$ with respect to an Oppermann-Thomas cluster tilting subcategory $\mathscr{T}$ is defined as:
\begin{align*}
\textrm{Ind}_{\mathscr{T}}(c) = \Sigma_{i=0}^d (-1)^i[t_i]
\end{align*}
where
\begin{align*}
t_d \rightarrow t_{d-1} \rightarrow \cdots \rightarrow t_1 \rightarrow t_0 \rightarrow c \rightarrow \Sigma^d(t_d)
\end{align*}
is a $(d+2)$-angle with each $t_i \in \mathscr{T}$. It follows from \cite[Remark~5.4]{JustJorg} that the index is well defined.
\end{definition}

We show another two propositions, which will allow us to prove Theorem \ref{theRmk}.
\begin{proposition}\label{extraProp1}
Let $\mathscr{C}$ be a $K$-linear, Hom-finite $(d+2)$-angulated category where $d$ is odd, and let $\mathscr{T}$ be an Oppermann-Thomas cluster tilting subcategory. Assume that if $c, x \in \mathscr{C}$ are indecomposable, then $\tilthoms{}(c, x)$ and $\homs{}^{[\Sigma^d\mathscr{T}]}(c,\Sigma^d(x))$ cannot be simultaneously non-zero. Then
\begin{align*}
\textrm{dim}_K \tilthoms{}(c, x) = \textrm{max}(0, \Sigma_{i=0}^{i=d} (-1)^i \textrm{dim}_K \homs{}(t_i, x))
\end{align*}
and
\begin{align*}
-\textrm{dim}_K \homs{}^{[\Sigma^d\mathscr{T}]}(c,\Sigma^d(x)) = \textrm{min}(0, \Sigma_{i=0}^{i=d} (-1)^i \textrm{dim}_K \homs{}(t_i, x)).
\end{align*}
In particular, these values are determined by $\textrm{Ind}_{\mathscr{T}}(c)$ and $x$.
\end{proposition}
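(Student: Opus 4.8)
The plan is to obtain both displayed equalities directly from Proposition \ref{theprop}(ii), with the simultaneity hypothesis supplying exactly the extra information needed to split that single identity into two. Fix a $(d+2)$-angle $t_d \to t_{d-1} \to \cdots \to t_0 \to c \to \Sigma^d(t_d)$ with $t_i \in \mathscr{T}$, which exists by the definition of an Oppermann--Thomas cluster tilting subcategory, and write $A = \textrm{dim}_K \tilthoms{}(c,x)$, $B = \textrm{dim}_K \homs{}^{[\Sigma^d\mathscr{T}]}(c, \Sigma^d(x))$ and $S = \Sigma_{i=0}^{i=d} (-1)^i \textrm{dim}_K \homs{}(t_i, x)$, so that $A, B \geq 0$. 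Since $d$ is odd, $(-1)^d = -1$, so Proposition \ref{theprop}(ii) becomes simply $A - B = S$.

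Next I would invoke the hypothesis, with $c$ and $x$ indecomposable as in its statement: then $\tilthoms{}(c,x)$ and $\homs{}^{[\Sigma^d\mathscr{T}]}(c, \Sigma^d(x))$ are not both non-zero, i.e.\ $A = 0$ or $B = 0$. Now argue by cases using $A - B = S$. If $B = 0$, then $A = S$; since $A \geq 0$ this forces $S \geq 0$, whence $\textrm{max}(0,S) = S = A$ and $\textrm{min}(0,S) = 0 = -B$. If $A = 0$, then $-B = S$; since $B \geq 0$ this forces $S \leq 0$, whence $\textrm{max}(0,S) = 0 = A$ and $\textrm{min}(0,S) = S = -B$. (If $A = B = 0$, both cases apply and agree, with $S = 0$.) In every case $\textrm{dim}_K \tilthoms{}(c,x) = \textrm{max}(0,S)$ and $-\textrm{dim}_K \homs{}^{[\Sigma^d\mathscr{T}]}(c,\Sigma^d(x)) = \textrm{min}(0,S)$, which are the two asserted formulas.

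For the closing sentence I would note that $S$ depends only on $\textrm{Ind}_{\mathscr{T}}(c)$ and $x$: for fixed $x$ the map $t \mapsto \textrm{dim}_K \homs{}(t,x)$ is additive on direct sums, hence factors through a homomorphism $K_0^{\textrm{Split}}(\mathscr{T}) \to \mathbb{Z}$, and evaluating this homomorphism on $\textrm{Ind}_{\mathscr{T}}(c) = \Sigma_{i=0}^{d}(-1)^i[t_i]$ returns exactly $S$. In particular $S$ does not depend on the chosen $(d+2)$-angle, consistently with the well-definedness of the index, and $\textrm{max}(0,S)$, $\textrm{min}(0,S)$, and therefore $A$ and $B$, are determined by $\textrm{Ind}_{\mathscr{T}}(c)$ and $x$ alone.

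I do not foresee a genuine difficulty here: all the real work has already been done in Proposition \ref{theprop}(ii), and the rest is the sign remark $(-1)^d = -1$, the two-case bookkeeping, and the descent of the alternating sum of $\textrm{dim}_K\homs{}(t_i,x)$ to the split Grothendieck group. The one conceptual point worth isolating is that the simultaneity hypothesis is exactly what turns the relation $A - B = S$ into individual formulas for $A$ and $B$; without it one recovers only $A - B$.
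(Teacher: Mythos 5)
Your proof is correct and follows essentially the same route as the paper: both deduce the two formulas from Proposition \ref{theprop}(ii) together with the hypothesis that $\tilthoms{}(c,x)$ and $\homs{}^{[\Sigma^d\mathscr{T}]}(c,\Sigma^d(x))$ are not simultaneously non-zero, differing only in that you case-split on which dimension vanishes while the paper case-splits on the sign of the alternating sum. Your closing remark on why the alternating sum descends to $K_0^{\textrm{Split}}(\mathscr{T})$ is a slightly more explicit justification of the final sentence than the paper gives.
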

\begin{proof}
By Proposition \ref{theprop} part (ii), we already know that 
\begin{align*}
\textrm{dim}_K \tilthoms{}(c, x) + (-1)^d \textrm{dim}_K \homs{}^{[\Sigma^d\mathscr{T}]}(c,\Sigma^d(x))= \Sigma_{i=0}^{i=d} (-1)^i \textrm{dim}_K \homs{}(t_i, x).
\end{align*}
By the assumptions in the proposition, this means that when $\Sigma_{i=0}^{i=d} (-1)^i \textrm{dim}_K \homs{}(t_i, x) >0$ we have that 
\begin{align*}
\textrm{dim}_K \tilthoms{}(c, x)= \Sigma_{i=0}^{i=d} (-1)^i \textrm{dim}_K \homs{}(t_i, x).
\end{align*}
Similarly, when $\Sigma_{i=0}^{i=d} (-1)^i \textrm{dim}_K \homs{}(t_i, x) < 0$ we have that 
\begin{align*}
- \textrm{dim}_K \homs{}^{[\Sigma^d\mathscr{T}]}(c,\Sigma^d(x))= \Sigma_{i=0}^{i=d} (-1)^i \textrm{dim}_K \homs{}(t_i, x).
\end{align*}
Finally, if $\Sigma_{i=0}^{i=d} (-1)^i \textrm{dim}_K \homs{}(t_i, x) = 0$ then 
\begin{align*}
\textrm{dim}_K \tilthoms{}(c, x) = \textrm{dim}_K \homs{}^{[\Sigma^d\mathscr{T}]}(c,\Sigma^d(x))= 0.
\end{align*}
This is equivalent to the claim made in the statement of this proposition. The fact that these values are determined by $\textrm{Ind}_{\mathscr{T}}(c)$ and $x$ follows from the definitions.
\end{proof}

We state a remark from Iyama \cite{Iyama} that will allow us to finish the proof of Theorem \ref{theRmk}.

\begin{remark}[{\cite[Proposition~2.2.2]{Iyama}}]\label{resRmk}
Let $\mathscr{A}$ be an abelian category, and let $\mathscr{F}$ be a $d$-cluster tilting subcategory of $\mathscr{A}$. Then each $a \in \mathscr{A}$ has left and right $\mathscr{F}$-resolutions as in Definition \ref{resDef} of length $\leq d-1$.
\end{remark}

We show a final proposition to prove Theorem \ref{theRmk}.

\begin{proposition}\label{extraProp2}
Let $\mathscr{D}$ be a $d$-cluster tilting subcategory of $\textrm{mod}$ $\Lambda$, where $\Lambda$ is a finite-dimensional $K$-algebra. If $c \in \mathscr{D}$ and $\textrm{dim}_K\textrm{Hom}_{\mathscr{D}}(c, x)$ is known for each indecomposable $x \in \mathscr{D}$, then $c$ is determined up to isomorphism.
\end{proposition}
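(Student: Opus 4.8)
The plan is to recover $c$ from the given data by computing, for each indecomposable object $y\in\mathscr{D}$, the multiplicity $m_y$ of $y$ as a direct summand of $c$. Since $\Lambda$ is finite dimensional, $\textrm{mod}\,\Lambda$, and hence its full subcategory $\mathscr{D}$, is a Krull--Schmidt category, so knowing all the $m_y$ determines $c$ up to isomorphism. Writing $\textrm{rad}_{\mathscr{D}}$ for the Jacobson radical of $\mathscr{D}$, the first observation is that
\[
m_y=\textrm{dim}_K\textrm{Hom}_{\mathscr{D}}(c,y)-\textrm{dim}_K\textrm{rad}_{\mathscr{D}}(c,y):
\]
indeed $\textrm{Hom}_{\mathscr{D}}(c,y)/\textrm{rad}_{\mathscr{D}}(c,y)$ splits as a direct sum over the indecomposable summands of $c$, and since $K$ is algebraically closed $\textrm{Hom}_{\mathscr{D}}(z,y)/\textrm{rad}_{\mathscr{D}}(z,y)$ is $K$ when $z\cong y$ and $0$ otherwise. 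So it suffices to express $\textrm{dim}_K\textrm{rad}_{\mathscr{D}}(c,y)$ in terms of the numbers $\textrm{dim}_K\textrm{Hom}_{\mathscr{D}}(c,x)$.

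For this I would invoke Iyama's higher Auslander--Reiten theory \cite{Iyama}: since $\mathscr{D}$ is a $d$-cluster tilting subcategory of $\textrm{mod}\,\Lambda$, it admits a sink map (minimal right almost split morphism) $g\colon E_0\to y$ with $E_0\in\mathscr{D}$. A morphism $c\to y$ fails to be a split epimorphism exactly when each of its components over the indecomposable summands of $c$ is a non-isomorphism, i.e.\ exactly when it lies in $\textrm{rad}_{\mathscr{D}}(c,y)$; and such morphisms are precisely the ones factoring through $g$, because $g$ is right almost split and itself lies in $\textrm{rad}_{\mathscr{D}}(E_0,y)$. Hence $\textrm{rad}_{\mathscr{D}}(c,y)$ equals the image of $g_*\colon\textrm{Hom}_{\mathscr{D}}(c,E_0)\to\textrm{Hom}_{\mathscr{D}}(c,y)$. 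Letting $W$ be the kernel of $g$ taken in $\textrm{mod}\,\Lambda$ and applying the left exact functor $\textrm{Hom}_{\Lambda}(c,-)$ to $0\to W\to E_0\xrightarrow{g}y$ identifies $\ker g_*$ with $\textrm{Hom}_{\Lambda}(c,W)$, so
\[
\textrm{dim}_K\textrm{rad}_{\mathscr{D}}(c,y)=\textrm{dim}_K\textrm{Hom}_{\mathscr{D}}(c,E_0)-\textrm{dim}_K\textrm{Hom}_{\Lambda}(c,W).
\]

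Finally, although $W$ need not lie in $\mathscr{D}$, by Remark \ref{resRmk} it admits an (augmented) left $\mathscr{D}$-resolution $0\to F_{d-1}\to\cdots\to F_1\to F_0\to W\to0$ with all $F_j\in\mathscr{D}$; by the definition of such a resolution this stays exact after applying $\textrm{Hom}_{\Lambda}(c,-)$ (as $c\in\mathscr{D}$), and an Euler characteristic count then gives $\textrm{dim}_K\textrm{Hom}_{\Lambda}(c,W)=\sum_{j=0}^{d-1}(-1)^j\textrm{dim}_K\textrm{Hom}_{\mathscr{D}}(c,F_j)$. Substituting back, $m_y$ is exhibited as an explicit integer combination of the numbers $\textrm{dim}_K\textrm{Hom}_{\mathscr{D}}(c,b)$ with $b$ running over the (finitely many) indecomposable summands of $y$, $E_0$, and $F_0,\dots,F_{d-1}$, all of which are objects of $\mathscr{D}$. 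Hence every $m_y$, and therefore $c$, is determined by the given data.

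I expect the delicate step to be the use of the sink map: one must be sure that such a morphism exists inside $\mathscr{D}$ itself (not merely inside $\textrm{mod}\,\Lambda$), and check carefully that $\textrm{im}(g_*)$ is exactly $\textrm{rad}_{\mathscr{D}}(c,y)$ — the point needing attention being the case where $y$ is itself a summand of $c$, so that $\textrm{rad}_{\mathscr{D}}(y,y)$ is the maximal ideal of the local ring $\textrm{End}_{\mathscr{D}}(y)$ rather than the whole ring. Everything else is routine: left exact sequences, Krull--Schmidt uniqueness, and the defining property of a left $\mathscr{D}$-resolution.
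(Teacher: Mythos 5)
Your argument is correct, but it takes a genuinely different route from the paper. The paper applies Remark \ref{resRmk} to an \emph{arbitrary} module $m\in\textrm{mod}\,\Lambda$: the augmented left $\mathscr{D}$-resolution $0\to x_{d-1}\to\cdots\to x_0\to m\to 0$ stays exact under $\textrm{Hom}_{\Lambda}(c,-)$ because $c\in\mathscr{D}$, so rank--nullity recovers $\textrm{dim}_K\textrm{Hom}_{\Lambda}(c,m)$ for \emph{every} $m$, and Auslander's classical theorem (\cite[Corollary~1.2]{AusReit}, that a finite-dimensional module is determined up to isomorphism by the dimensions of all Hom-spaces out of it) finishes the proof in one line. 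You instead stay inside $\mathscr{D}$ and extract the multiplicity $m_y$ of each indecomposable summand directly, via a minimal right almost split map $g\colon E_0\to y$ in $\mathscr{D}$, using the resolution only for $W=\ker g$; this yields the explicit formula $m_y=\textrm{dim}\,\textrm{Hom}(c,y)-\textrm{dim}\,\textrm{Hom}(c,E_0)+\sum_{j}(-1)^j\textrm{dim}\,\textrm{Hom}(c,F_j)$ and avoids citing Auslander's theorem, at the price of needing sink maps to exist in $\mathscr{D}$ itself. That existence is not automatic but does hold: a $d$-cluster tilting subcategory is functorially finite by definition, so the Auslander--Smal\o{} theory of almost split morphisms in subcategories (subsumed in Iyama's higher Auslander--Reiten theory) supplies a minimal right almost split map for every indecomposable of $\mathscr{D}$, including the $\textrm{Ext}$-projective ones. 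Your identification of $\textrm{rad}_{\mathscr{D}}(c,y)$ with $\textrm{im}(g_*)$, and your treatment of the case where $y$ is itself a summand of $c$, are both correct, as is the use of $K$ algebraically closed to get $\textrm{End}(y)/\textrm{rad}\,\textrm{End}(y)\cong K$. Both proofs ultimately rest on Remark \ref{resRmk}; yours applies it once per sink-map kernel, the paper's once per module of $\textrm{mod}\,\Lambda$.
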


\begin{proof}
By Remark \ref{resRmk}, each $m \in \textrm{mod}$ $\Lambda$ has an augmented $\mathscr{D}$-left resolution
\begin{align*}
0 \to x_{d-1} \to x_{d-2} \to \ldots \to x_0 \to m \to 0
\end{align*}
which gives an exact sequence
\begin{align*}
0 \to \textrm{Hom}_{\Lambda}(c, x_{d-1}) \to \ldots \to \textrm{Hom}_{\Lambda}(c, x_0) \to \textrm{Hom}_{\Lambda}(c, m) \to 0
\end{align*}
for all $c \in \mathscr{D}$ by Definition \ref{resDef}. By assumption, we know $\textrm{dim}_K\textrm{Hom}_{\Lambda}(c, x_i)$ for each $x_i$. By the Rank-Nullity theorem, we know $\textrm{dim}_K\textrm{Hom}_{\Lambda}(c, m)$. By Auslander \cite[Corollary~1.2]{AusReit}, we then have determined $c$ up to isomorphism. 
\end{proof}

\paragraph{Proof of Theorem A}
\begin{proof}
By Proposition \ref{extraProp1} the index $\textrm{Ind}_{\mathscr{T}}(c)$ determines $\textrm{dim}_K \tilthoms{}(c,x)$ for each indecomposable $x \in \mathscr{C}$. If $\textrm{dim}_K \tilthoms{}(c,x)$ is zero for each indecomposable $x \in \mathscr{C}$, then $c$ is zero in $\frac{\mathscr{C}}{[\Sigma^d\mathscr{T}]}$ so $c \in \Sigma^d\mathscr{T}$. Hence there is a $(d+2)$-angle
\begin{align*}
t_d \to \ldots \to t_0 \to c \to \Sigma^d t_d
\end{align*}
with $t_d =\Sigma^{-d} c$ and all other $t_i$ equal to zero. It follows that $\textrm{Ind}_{\mathscr{T}}(c) = (-1)^d[\Sigma^{-d} c] = -[\Sigma^{-d} c]$, and this determines $c$ up to isomorphism. \\

If $\textrm{dim}_K \tilthoms{}(c,x)$ is not zero for each indecomposable $x \in \mathscr{C}$, then $c$ is non-zero in $\frac{\mathscr{C}}{[\Sigma^d\mathscr{T}]}$ so $c$ is not in $\Sigma^d\mathscr{T}$. We can use Theorem \ref{JorgJac} to identify $\frac{\mathscr{C}}{[\Sigma^d\mathscr{T}]}$ with $\mathscr{D}$, a $d$-cluster tilting subcategory of $\textrm{mod}$ $\Lambda$ where $\Lambda = \textrm{End}(T)$. Then $\textrm{Ind}_{\mathscr{T}}(c)$ determines $\textrm{dim}_K \tilthoms{}(c,x)$ for each indecomposable $x \in \mathscr{D}$, so proposition \ref{extraProp2} means that $\textrm{Ind}_{\mathscr{T}}(c)$ determines $c$ up to isomorphism in $\mathscr{D}$, hence in $\frac{\mathscr{C}}{[\Sigma^d\mathscr{T}]}$. As the projection functor $\mathscr{C} \to \frac{\mathscr{C}}{[\Sigma^d \mathscr{T}]}$ gives a bijection from the isoclasses of indecomposable objects outside of $\Sigma^d \mathscr{T}$ to isoclasses of all indecomposables, we have that $\textrm{Ind}_{\mathscr{T}}(c)$ determines $c$ up to isomorphism in $\mathscr{C}$ because $c$ is not in $\Sigma^d\mathscr{T}$.
\end{proof}

We note here that the proof of Corollary \ref{corB} follows directly from Theorem \ref{theRmk} and Proposition \ref{restateProp}.

\section{Proof of Theorem C}
We will now demonstrate a class of categories where we may apply Theorem \ref{theRmk}. We use the class of higher dimensional cluster categories defined in \cite[Sections~5 and 6]{OppermannThomas}, which are analogous to the cluster categories of type $A_n$. We will label them as $\mathscr{C}(A_n^d)$, which is the $(d+2)$-angulated \textit{Oppermann-Thomas cluster category} of Dynkin type $A_n$. \\
The following description of $\mathscr{C}(A_n^d)$ is a restatement of Propositions 3.12 and 6.1 and Lemma 6.6(2) in \cite{OppermannThomas}. We take the canonical cyclic ordering of the set $V = \{ 1, 2, \ldots, n + 2d+1\}$, which it can be helpful to think of as the vertices of an $(n + 2d + 1)$-gon labelled in a clockwise direction. This means that for three points in our ordering $x, y, z$ such that $x < y < z$, if we start at $x$ and move clockwise, we will encounter first $y$ then $z$. It is worth noting that if we have $x < y < z$, then we also have that $y < z \leq x$ and $z \leq x <  y$. For a point $x$ in our ordering, we denote by $x^-$ the vertex of our polygon that is one step anticlockwise of $x$.\\

\begin{proposition}\label{propBij}
The indecomposable objects of $\mathscr{C}(A_n^d)$ are in bijection with subsets of $V$ that have size $d+1$ and contain no neighbouring vertices. We identify each indecomposable $X$ with its subset of $V$, and will write $X = \{x_0, x_1, \ldots, x_d\}$.
\end{proposition}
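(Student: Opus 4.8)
The plan is to unwind the construction of $\mathscr{C}(A_n^d)$ due to Oppermann and Thomas and read off the combinatorial parametrisation of its indecomposables; since the statement is explicitly a repackaging of \cite[Propositions~3.12 and 6.1, Lemma~6.6(2)]{OppermannThomas}, nothing is proved from scratch, and the work consists of aligning conventions. First I would recall the definition of $\mathscr{C}(A_n^d)$ from \cite[Sections~5 and 6]{OppermannThomas}: it is the $d$-cluster category of Dynkin type $A_n$, built as an orbit category of the bounded derived category of the associated higher Auslander algebra and equipped with its $(d+2)$-angulated structure via \cite{GKO}. This first step is purely setup, so that the geometric model applies.

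Next I would invoke the geometric model of \cite[Proposition~3.12]{OppermannThomas}, which parametrises the indecomposables arising in this setting by the internal $d$-simplices of a cyclic polytope, i.e.\ by $(d+1)$-element subsets of a cyclically ordered vertex set no two of whose elements are neighbours, the ``no neighbouring vertices'' condition being the translation of internality (that the simplex does not lie on the boundary of the polytope). Then \cite[Proposition~6.1]{OppermannThomas} and \cite[Lemma~6.6(2)]{OppermannThomas} transport this description to $\mathscr{C}(A_n^d)$ itself and fix the size of the vertex set as $|V| = n + 2d + 1$; passing to the cluster category by taking the orbit under the relevant autoequivalence is exactly what turns the linear combinatorics of the module category into the cyclic combinatorics of the $(n + 2d + 1)$-gon. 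Composing these identifications yields the bijection between indecomposable objects of $\mathscr{C}(A_n^d)$ and $(d+1)$-subsets of $V$ containing no neighbouring vertices, and I would then simply fix, for each indecomposable $X$, the notation $X = \{x_0, x_1, \ldots, x_d\}$ for the corresponding subset.

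The only thing needing genuine attention, rather than a real obstacle, is the bookkeeping: checking that the vertex count is $n + 2d + 1$ and not a neighbouring value (the sanity check being the classical case $d = 1$, where the indecomposables of the cluster category of type $A_n$ are the diagonals of an $(n+3)$-gon and $n + 3 = n + 2 \cdot 1 + 1$), and checking that ``no neighbouring vertices'' really is the image of the internality condition after passing to orbits, so that degenerate simplices and boundary simplices are both correctly excluded. Since \cite{OppermannThomas} carries out precisely this analysis, the proof reduces to assembling Propositions~3.12 and 6.1 and Lemma~6.6(2) and recording the outcome in the form stated.
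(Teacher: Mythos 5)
Your proposal is correct and takes essentially the same route as the paper: the paper offers no independent proof of this proposition, presenting it (together with the surrounding discussion) as a direct restatement of \cite[Propositions~3.12 and 6.1, Lemma~6.6(2)]{OppermannThomas}, which is exactly the assembly you carry out. Your additional sanity check against the classical $d=1$ case, where the $(n+3)$-gon recovers the diagonals model of the type $A_n$ cluster category, is a sensible consistency check but not something the paper records.
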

We see immediately that by setting $d=1$ in proposition \ref{propBij}, we obtain the indecomposables of the traditional cluster category of type $A_n$ \cite[Section~2.2]{CCS}.

Using the identification described in proposition \ref{propBij}, we can easily describe the action of the translation functor, and also how the indecomposable objects interact with one another.
\begin{proposition}\label{propFunc}
The translation functor simply shifts an indecomposable by one place; that is, if $X = \{x_0, x_1, \ldots, x_d\}$, then $\Sigma^d(X) = \{x_0^-, x_1^-, \ldots, x_d^-\}$.
\end{proposition}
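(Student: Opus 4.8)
The plan is to read off the action of $\Sigma^d$ directly from Oppermann-Thomas's construction of $\mathscr{C}(A_n^d)$, since Proposition~\ref{propBij} was itself obtained by transporting their combinatorial description (their Propositions~3.12 and~6.1 and Lemma~6.6(2)) into the language of $(d+1)$-subsets of $V$. Concretely, $\mathscr{C}(A_n^d)$ sits as a $d$-cluster tilting subcategory inside an algebraic triangulated category, and its $d$-suspension $\Sigma^d$ is, by definition, the restriction to $\mathscr{C}(A_n^d)$ of a fixed autoequivalence of that ambient category. Since $\Sigma^d$ is an autoequivalence it must permute the indecomposables, each of which is indexed by a $(d+1)$-subset with no two neighbouring vertices, so it suffices to compute the induced permutation of index sets.

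First I would recall precisely the bijection of Proposition~\ref{propBij} --- that is, which object under Oppermann-Thomas's identification corresponds to the subset $X = \{x_0, \ldots, x_d\}$ --- and then invoke their Lemma~6.6(2), which records how the suspension acts on these indices: it decrements every vertex by one step in the cyclic order on $V = \{1, \ldots, n+2d+1\}$. Translating ``one step backwards in the cyclic order'' into the paper's notation $x \mapsto x^-$ gives $\Sigma^d(X) = \{x_0^-, x_1^-, \ldots, x_d^-\}$. I would then check that this is well-defined on the level of labels: $\{x_0^-, \ldots, x_d^-\}$ is again a $(d+1)$-subset of $V$ with no two neighbouring vertices, because $x \mapsto x^-$ is a cyclic rotation of $V$, hence a bijection preserving the ``neighbouring'' relation --- so the formula is consistent with Proposition~\ref{propBij} and is forced.

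As a consistency check I would specialise to $d = 1$: then $|V| = n+3$, the admissible subsets are the diagonals of the $(n+3)$-gon, and the statement reduces to the classical fact that in the cluster category of type $A_n$ the shift $\Sigma = \tau$ rotates the polygon by one vertex, matching Section~2.2 of \cite{CCS}. I expect the only genuinely delicate point to be orientation bookkeeping: the present paper and Oppermann-Thomas must use compatible clockwise/anticlockwise conventions for labelling $V$, so one has to be sure the suspension rotates in the direction claimed rather than its inverse. This is pinned down both by the $d=1$ comparison above and by the requirement that $(\Sigma^d)^2 = \Sigma^{2d}$ agree with the Serre functor of the $2d$-Calabi-Yau category $\mathscr{C}(A_n^d)$, which rotates by a fixed even number of steps; once the direction is fixed, the proof is immediate.
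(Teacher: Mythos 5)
Your proposal is correct and matches the paper's treatment: the paper gives no independent proof of this proposition, presenting it (together with Propositions \ref{propBij} and \ref{homProp}) as a direct restatement of Propositions 3.12 and 6.1 and Lemma 6.6(2) of \cite{OppermannThomas}, which is precisely the citation you lean on. Your added checks (well-definedness of the shifted subset, the $d=1$ specialisation, and the orientation bookkeeping) are sensible but go beyond what the paper records.
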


\begin{definition}\label{intertwine}
For two indecomposable objects $X$ and $Y$ of $\mathscr{C}(A_n^d)$, we say that $X$ and $Y$ \textit{intertwine} if there is a labelling of $X = \{x_0, x_1, \ldots, x_d\}$ and of $Y=\{y_0, y_1, \ldots, y_d\}$ such that 
\begin{align*}
x_0 < y_0 < x_1 < y_1 < x_2 < \ldots < x_d < y_d < x_0.
\end{align*}
\end{definition}
We see that definition \ref{intertwine} is symmetric; we take $Y = Y'$, where we choose the labelling as $y_i' = y_{i-1}$ for $1 \leq i \leq d$ and $y_0' = y_d$. This gives us that
\begin{align*}
y_0' < x_0 < y_1' < x_1 < y_2' <\ldots < y_d' < x_d < y_0'
\end{align*}
as required. \\

For two indecomposable objects $X$ and $Y$ of $\mathscr{C}(A_n^d)$, either $\textrm{Hom}(X, Y) = 0$ or $\textrm{Hom}(X, Y) = K$; this is the same as in the classic cluster category. In fact, we have the following: 
\begin{proposition}[{\cite[Proposition~6.1]{OppermannThomas}}]\label{homProp}
For two indecomposable objects $X$ and $Y$ of $\mathscr{C}(A_n^d)$, we have $\textrm{Hom}(X, Y) = K$ if and only if $X$ and $\Sigma^{-d}(Y)$ intertwine. This is equivalent to $X$ and $Y$ having labellings such that the following is true:
\begin{align*}
x_0 \leq y_0 \leq x_1^{--} < x_1 \leq y_1 \leq x_2^{--} < \ldots < x_d \leq y_d \leq x_0^{--}.
\end{align*}
\end{proposition}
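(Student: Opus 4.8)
The first half of the statement, the equivalence of $\textrm{Hom}(X,Y)=K$ with the intertwining of $X$ and $\Sigma^{-d}(Y)$, is \cite[Proposition~6.1]{OppermannThomas}, so I would quote it and devote the proof to showing that this intertwining condition is the same as the displayed chain of inequalities in the $x_i$ and $y_i$. The plan is to transport the condition along the polygon rotation. Since $\Sigma^d$ is an automorphism of $\mathscr{C}(A_n^d)$ acting on indecomposables, by Proposition \ref{propFunc}, as the one-step anticlockwise rotation $v\mapsto v^-$ of the $(n+2d+1)$-gon, and since a rotation preserves the cyclic order on $V$, it carries intertwining pairs to intertwining pairs and is invertible; applying it to the pair $(X,\Sigma^{-d}(Y))$ therefore shows that $X$ and $\Sigma^{-d}(Y)$ intertwine if and only if $\Sigma^d(X)=\{x_0^-,\dots,x_d^-\}$ and $Y=\{y_0,\dots,y_d\}$ intertwine.

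Next I would unwind this using Definition \ref{intertwine} together with the relabelling freedom recorded just after it: $\Sigma^d(X)$ and $Y$ intertwine exactly when, after cyclically relabelling the two index sets, the $2(d+1)$ vertices sit around the polygon in the order
\[
x_0^- < y_0 < x_1^- < y_1 < \cdots < x_d^- < y_d < x_0^-.
\]
Each consecutive pair of strict inequalities $x_i^- < y_i < x_{i+1}^-$ (indices mod $d+1$) can then be rewritten in terms of $y_i$ alone: the vertex immediately clockwise of $x_i^-$ is $x_i$, so $x_i^- < y_i$ forces $x_i\le y_i$, and the vertex immediately anticlockwise of $x_{i+1}^-$ is $x_{i+1}^{--}$, so $y_i < x_{i+1}^-$ forces $y_i\le x_{i+1}^{--}$. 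Conversely, if $x_i\le y_i\le x_{i+1}^{--}$ for all $i$, then $x_i^- < x_i\le y_i\le x_{i+1}^{--} < x_{i+1}^-$, recovering the displayed cyclic order. This is precisely the chain $x_0\le y_0\le x_1^{--} < x_1\le y_1\le \cdots < x_d\le y_d\le x_0^{--}$, the intermediate strict steps $x_{i+1}^{--} < x_{i+1}$ being automatic.

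The only point requiring genuine care is the cyclic-order bookkeeping: one should make precise what ``strictly between'' means for the closely-spaced vertices $x_i^-,x_i$ and $x_{i+1}^{--},x_{i+1}^-$, ideally by fixing the basepoint $x_0$ and working with the induced linear order as in the discussion preceding Proposition \ref{propBij}, and one should check that the near-degenerate cases remain consistent with each of $X$ and $Y$ having no neighbouring vertices (for example $y_i=x_i$ is permitted exactly because $x_i$ and $x_{i+1}$ are never adjacent, so $x_i\le x_{i+1}^{--}$ holds automatically, and all $2(d+1)$ listed vertices stay distinct). Beyond this the argument is a direct substitution into Definition \ref{intertwine}, so I expect this bookkeeping to be the main, though modest, obstacle.
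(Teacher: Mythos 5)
Your proposal is correct. Note that the paper itself offers no proof of this statement: it is presented as a restatement of Propositions 3.12 and 6.1 and Lemma 6.6(2) of \cite{OppermannThomas}, so both the $\textrm{Hom}$--intertwining equivalence and the inequality reformulation are being imported from the literature rather than derived. Your derivation of the reformulation is the natural one and checks out: rotating the pair $(X,\Sigma^{-d}(Y))$ by $\Sigma^d$ to $(\Sigma^d(X),Y)$ is legitimate since the rotation preserves the cyclic order, and the translation of each strict cyclic condition $x_i^- < y_i < x_{i+1}^-$ into $x_i \leq y_i \leq x_{i+1}^{--}$ (with $x_{i+1}^{--} < x_{i+1}$ automatic because $X$ has no neighbouring vertices) is exactly the bookkeeping needed, including the patching of the local conditions into the global cyclic chain. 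So your write-up supplies a proof where the paper gives only a citation.
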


We may also speak to whether or not there is a factorisation of a non-zero homomorphism in $\mathscr{C}(A_n^d)$.
\begin{proposition}[{\cite[Proposition~3.12]{OppermannThomas}}]\label{propFact}
For two indecomposable objects $X$ and $Y$ of $\mathscr{C}(A_n^d)$ satisfying the condition in Proposition \ref{homProp}, a non-zero morphism $X \to Y$ factors through a third irreducible $Z$ if and only if there exists a labelling for $Z=\{z_0, z_1, \ldots, z_d\}$ such that
\begin{align*}
x_0 \leq z_0 \leq y_0, x_1 \leq z_1 \leq y_1, \ldots, x_d \leq z_d \leq y_d.
\end{align*}
\end{proposition}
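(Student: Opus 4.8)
The plan is to reduce the statement to a non-vanishing question about composites and then settle that question inside the explicit model of $\mathscr{C}(A_n^d)$ from \cite{OppermannThomas}. Since $\mathscr{C}(A_n^d)$ is Krull--Schmidt and a morphism factors through an object precisely when it factors through one of its indecomposable summands, and since $\textrm{Hom}(X,Y)$ is at most one-dimensional (so a non-zero $X\to Y$ is unique up to scalar), the assertion that $X\to Y$ factors through the irreducible $Z$ is equivalent to: $\textrm{Hom}(X,Z)\neq 0$, $\textrm{Hom}(Z,Y)\neq 0$, and the composite of the (essentially unique) non-zero maps $X\to Z$ and $Z\to Y$ is itself non-zero. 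So the proof splits into translating the two Hom-nonvanishing conditions into combinatorial inequalities via Proposition \ref{homProp}, and deciding when the composite survives.

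For the first part, fix labellings of $X$ and $Y$ realising the chain of Proposition \ref{homProp}, namely $x_0\leq y_0\leq x_1^{--}<x_1\leq y_1\leq x_2^{--}<\cdots<x_d\leq y_d\leq x_0^{--}$ read cyclically. Given a labelling of $Z$ with $x_i\leq z_i\leq y_i$ for all $i$, inserting each $z_i$ into this chain between $x_i$ and $y_i$ yields $x_0\leq z_0\leq y_0\leq x_1^{--}<x_1\leq z_1\leq y_1\leq x_2^{--}<\cdots$; from this, together with the observation that $x_{i+1}\leq z_{i+1}$ forces $x_{i+1}^{--}\leq z_{i+1}^{--}$, one reads off directly from Proposition \ref{homProp} that $\textrm{Hom}(X,Z)\neq 0$ and $\textrm{Hom}(Z,Y)\neq 0$. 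This establishes the ``if'' direction provided the composite $X\to Z\to Y$ is non-zero. For the ``only if'' direction, a non-zero factorisation $X\to Z\to Y$ forces $\textrm{Hom}(X,Z)\neq 0$ and $\textrm{Hom}(Z,Y)\neq 0$, so Proposition \ref{homProp} supplies labellings of the pairs $(X,Z)$ and $(Z,Y)$; one then argues that the cyclic alignments of these with the fixed labellings of $X$ and of $Y$ must coincide --- otherwise the composite would necessarily vanish --- after which concatenating the two inequality chains produces a labelling of $Z$ with $x_i\leq z_i\leq y_i$.

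The main obstacle is exactly the non-vanishing (and, in the converse, the vanishing) of these composites; everything else is bookkeeping with the cyclic order. To handle it I would work in the realisation of $\mathscr{C}(A_n^d)$ from \cite[Sections~5 and 6]{OppermannThomas}, where it sits inside a derived/orbit category of Dynkin type $A$ whose Hom spaces and compositions are governed by the mesh relations of a $\mathbb{Z}\Delta$-type category. In that model a non-zero morphism between indecomposables is a scalar multiple of a ``monotone'' path in the mesh quiver, and I would show that the concatenation of a monotone path $X\to Z$ with a monotone path $Z\to Y$ is again monotone --- hence misses every mesh relation and is non-zero --- while a concatenation arising from cyclically misaligned labellings is forced through a mesh and dies. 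Pinning down precisely which paths represent non-zero morphisms and checking closure under the relevant concatenations is the step I expect to require the most care, and it is the point at which working in the Oppermann--Thomas type $A$ category, rather than an abstract $(d+2)$-angulated one, is indispensable.
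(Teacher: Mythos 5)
First, a point of comparison: the paper does not prove this statement at all --- it is quoted as Proposition~3.12 of Oppermann--Thomas, so there is no in-paper argument to measure yours against. Judged on its own terms, your opening reduction is correct and is the right first move: since $\textrm{Hom}(X,Y)$ is at most one-dimensional and $Z$ is indecomposable, ``the non-zero map $X\to Y$ factors through $Z$'' is equivalent to ``$\textrm{Hom}(X,Z)\neq 0$, $\textrm{Hom}(Z,Y)\neq 0$, and the composite of the essentially unique non-zero maps is non-zero,'' and your bookkeeping with Proposition \ref{homProp} in the ``if'' direction (inserting the $z_i$ into the chain for $X$ and $Y$) is fine.

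The genuine gap is that the entire content of the proposition is the non-vanishing (respectively, vanishing) of that composite, and this is precisely the step you defer. You assert that non-zero morphisms between indecomposables are ``monotone paths'' in a mesh quiver, that concatenations of monotone paths are again monotone and hence survive the mesh relations, and that ``cyclically misaligned'' concatenations are forced through a mesh and die; none of these claims is made precise, let alone proved (what is a monotone path in this $(d{+}2)$-angulated setting, why does monotonicity imply non-vanishing modulo the relations, and what exactly constitutes a misaligned labelling?). Note that the implication ``$\textrm{Hom}(X,Z)\neq 0$ and $\textrm{Hom}(Z,Y)\neq 0$ implies the composite is non-zero'' is false in general in categories of this kind --- that is exactly why the proposition demands a \emph{single} labelling of $Z$ with $x_i\leq z_i\leq y_i$ for all $i$ simultaneously, rather than separate labellings witnessing each Hom-space --- so the alignment condition is doing real work and cannot be waved at. As written, your argument establishes only the easy equivalences and leaves the heart of the statement as a plan; you would need to carry out the composition analysis in the explicit Oppermann--Thomas model (their proof works with the structure of morphisms coming from the higher Auslander algebras of type $A$), or else simply cite the result as the paper does.
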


It is also true, again due to \cite{OppermannThomas}, that our categories $\mathscr{C}(A_n^d)$ permit Oppermann-Thomas cluster tilting objects. By \cite[Theorem~2.4]{OppermannThomas} and \cite[Theorem~6.4]{OppermannThomas}, the sum $T$ of ${n + d - 1 \choose d}$ mutually non-intertwining indecomposable objects is an Oppermann-Thomas cluster tilting object. Combining this with \cite[Lemma~6.6]{OppermannThomas} gives us the following proposition:
\begin{proposition}
The sum $T$ of ${n + d - 1 \choose d}$ mutually non-intertwining indecomposable objects of $\mathscr{C}(A_n^d)$ is an Oppermann-Thomas cluster tilting object. Moreover, this describes all such objects. These objects are maximal with respect to the non-intertwining property.
\end{proposition}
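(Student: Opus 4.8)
The proof assembles facts that are already available; the only real content lies in the ``moreover'' clause. For the first statement and for the maximality assertion I would simply cite \cite[Theorem~2.4]{OppermannThomas} and \cite[Theorem~6.4]{OppermannThomas}, which show that a maximal family of pairwise non-intertwining indecomposables of $\mathscr{C}(A_n^d)$ is the set of indecomposable summands of an Oppermann-Thomas cluster tilting object, together with \cite[Lemma~6.6]{OppermannThomas}, which shows that such a maximal family has exactly ${n+d-1 \choose d}$ members. Consequently any family of ${n+d-1 \choose d}$ pairwise non-intertwining indecomposables is already maximal with respect to the non-intertwining property, and its sum is an Oppermann-Thomas cluster tilting object.

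For the converse inclusion, the plan is to first rephrase the rigidity axiom $\homs{}(\mathscr{T}, \Sigma^d(\mathscr{T})) = 0$ of an Oppermann-Thomas cluster tilting object in combinatorial terms. By Proposition \ref{propFunc} we have $\Sigma^{-d}(\Sigma^d(Y)) = Y$ for any indecomposable $Y$, so Proposition \ref{homProp} gives that $\homs{}(X, \Sigma^d(Y)) \neq 0$ if and only if $X$ and $Y$ intertwine. Hence $\homs{}(\mathscr{T}, \Sigma^d(\mathscr{T})) = 0$ says precisely that the indecomposable summands of $T$ are pairwise non-intertwining, and therefore every Oppermann-Thomas cluster tilting object is the sum of a mutually non-intertwining family of indecomposables.

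It then remains to see that such a family must be maximal, hence of size ${n+d-1 \choose d}$ by \cite[Lemma~6.6]{OppermannThomas}. If $T$ is an Oppermann-Thomas cluster tilting object whose summands form a non-maximal non-intertwining family, one may pick an indecomposable $Y$ intertwining none of them; then $T \oplus Y$ is rigid and strictly larger, contradicting the fact that an Oppermann-Thomas cluster tilting object cannot be a proper direct summand of another one. For $\mathscr{C}(A_n^d)$ this maximal-rigidity property itself follows from \cite[Theorem~6.4]{OppermannThomas} combined with the cardinality count, since the objects produced there already realise the maximal non-intertwining families; a clean alternative is to quote that all Oppermann-Thomas cluster tilting objects of $\mathscr{C}(A_n^d)$ have the same number of pairwise non-isomorphic indecomposable summands and compare with the explicit objects of the first paragraph.

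I expect this last maximality step to be the main obstacle, since everything else is either a direct citation to \cite{OppermannThomas} or an immediate application of Propositions \ref{propFunc} and \ref{homProp}; once it is in place, combining the three parts gives both the characterisation of all Oppermann-Thomas cluster tilting objects and the claimed maximality with respect to the non-intertwining property.
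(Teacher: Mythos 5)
Your proposal is correct and follows essentially the same route as the paper, which proves this proposition purely by citing \cite[Theorems~2.4 and~6.4]{OppermannThomas} together with \cite[Lemma~6.6]{OppermannThomas}. The only difference is that you explicitly spell out the ``moreover'' direction (rigidity of $T$ translates via Propositions \ref{propFunc} and \ref{homProp} into pairwise non-intertwining of the summands, and the cardinality count from \cite[Lemma~6.6]{OppermannThomas} forces maximality), which the paper leaves implicit in the citation; your argument there is sound.
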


We claim that our categories $\mathscr{C}(A_n^d)$ serve as an example for the use of Theorem \ref{theRmk}, thereby proving Theorem \ref{thmB} which follows from combining the following Lemma with Theorem \ref{theRmk}.
\begin{lemma}\label{lastLemma}
Let $\mathscr{C} = \mathscr{C}(A_n^d)$ be the $(d+2)$-angulated Oppermann-Thomas cluster category of Dynkin type $A_n$, and let $\mathscr{T} = \textrm{add}(T)$ be an Oppermann-Thomas cluster tilting subcategory. If $c, x \in \mathscr{C}$ are indecomposable, then $\tilthoms{}(c, x)$ and $\tilthoms{}(x, \Sigma^d(c))$ cannot be simultaneously non-zero.
\end{lemma}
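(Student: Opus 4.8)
The plan is to reduce the statement to a combinatorial assertion in the polygon model of $\mathscr{C}(A_n^d)$ from Section~4 and then to exploit the maximality of Oppermann-Thomas cluster tilting objects. First I would clear away the trivial cases: if $c\in\Sigma^d\mathscr{T}$ or $x\in\Sigma^d\mathscr{T}$ then the non-zero morphism $c\to x$ factors through $\Sigma^d\mathscr{T}$ (through $c$, respectively through $x$), so $\tilthoms{}(c,x)=0$; if $c\in\mathscr{T}$ then $\Sigma^d c\in\Sigma^d\mathscr{T}$ and $x\to\Sigma^d c$ factors through its target, so $\tilthoms{}(x,\Sigma^d c)=0$. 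Hence I may assume $c\notin\mathscr{T}\cup\Sigma^d\mathscr{T}$ and $x\notin\Sigma^d\mathscr{T}$. I would also record that $\homs{}(c,\Sigma^d c)=0$ for every indecomposable $c$: by Propositions~\ref{propFunc} and~\ref{homProp} a non-zero such morphism would force $c$ and $\Sigma^{-d}(\Sigma^d c)=c$ to intertwine, which is impossible since by Definition~\ref{intertwine} a $(d+1)$-subset of $V$ cannot intertwine itself.

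Now suppose for contradiction that $\tilthoms{}(c,x)$ and $\tilthoms{}(x,\Sigma^d c)$ are both non-zero. As every Hom-space between indecomposables of $\mathscr{C}(A_n^d)$ is $0$ or $K$, this means $\homs{}(c,x)=K$ and $\homs{}(x,\Sigma^d c)=K$ with the corresponding non-zero morphisms not factoring through $\Sigma^d\mathscr{T}$. By Proposition~\ref{homProp} the first says that $c$ and $\Sigma^{-d}(x)$ intertwine and the second that $x$ and $c$ intertwine. I would then check that both intertwinings are realised by one cyclic labelling: $c$ and $\Sigma^{-d}(x)$ intertwining places exactly one vertex of $\Sigma^{-d}(x)$ in each cyclic gap $(c_i,c_{i+1})$ of $c$, and since $x$ and $c$ intertwining forces the vertices of $x$ to be disjoint from those of $c$, the one-step shift carrying $\Sigma^{-d}(x)$ back to $x$ keeps each vertex $x_i$ strictly inside $(c_i,c_{i+1})$. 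Concretely, there are labellings $c=\{c_0,\dots,c_d\}$ and $x=\{x_0,\dots,x_d\}$ with $c_i^+\le x_i\le c_{i+1}^{--}$ for all $i$ modulo $d+1$, and in the matching labelling $\Sigma^d(c)=\{c_i^-\}$ the vertex $c_i^-$ lies strictly to the right of $x_i$ in the gap; i.e.\ $x$ sits between $c$ and $\Sigma^d c$ in every gap.

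Next I translate the non-factorisation hypotheses via Proposition~\ref{propFact}. Call an indecomposable $u$ a \emph{$P$-object} if it has a labelling with $c_i\le u_i\le x_i$ for all $i$, and a \emph{$Q$-object} if it has a labelling with $x_i\le u_i\le c_{i+1}^-$ for all $i$ (equivalently, $P$-objects are the $u$ with $\homs{}(c,u)\ne0\ne\homs{}(u,x)$ and $Q$-objects the $u$ with $\homs{}(x,u)\ne0\ne\homs{}(u,\Sigma^d c)$). Applying Proposition~\ref{propFact} with $Z=\Sigma^d t$ for an indecomposable summand $t$ of $T$, the morphism $c\to x$ factors through $\Sigma^d\mathscr{T}$ iff some summand of $\Sigma^d T$ is a $P$-object, and $x\to\Sigma^d c$ factors through $\Sigma^d\mathscr{T}$ iff some summand of $\Sigma^d T$ is a $Q$-object. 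So our assumption becomes: no indecomposable summand of $\Sigma^d T$ is a $P$-object or a $Q$-object. Note moreover that the arcs $[c_i,x_i]$ and $[x_i,c_{i+1}^-]$ together exhaust the cyclic gap $(c_i,c_{i+1})$.

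The remaining step is to derive a contradiction, and this is the crux. Since $\Sigma^d$ is an autoequivalence, $\Sigma^d\mathscr{T}=\textrm{add}(\Sigma^d T)$ is again an Oppermann-Thomas cluster tilting subcategory, so its indecomposable summands form a family of $\binom{n+d-1}{d}$ mutually non-intertwining indecomposables that is maximal for this property. Now $c$ is visibly a $P$-object (take its own labelling, since $c_i\le c_i\le x_i$), but $c\notin\textrm{add}(\Sigma^d T)$ by our reductions; so if $c$ intertwined no summand of $\Sigma^d T$ then adjoining $c$ would give a strictly larger non-intertwining family, contradicting maximality. Hence some summand $u$ of $\Sigma^d T$ intertwines $c$, and so $u$ has exactly one vertex $u_i$ in each gap $(c_i,c_{i+1})$. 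If $u_i\le x_i$ for every $i$ then $u$ is a $P$-object; if $u_i\ge x_i$ for every $i$ then $u$ is a $Q$-object (automatically $u_i\le c_{i+1}^-$); either contradicts the previous paragraph. So $u$ is ``mixed'': there are indices $i,j$ with $u_i<x_i$ and $u_j>x_j$. The task that remains --- and which I expect to be the main obstacle --- is to rule out such a mixed summand: here I would also bring in a summand $u'$ of $\Sigma^d T$ intertwining $x$ (which exists as $x\notin\textrm{add}(\Sigma^d T)$, by the same maximality argument) and show, by tracking the cyclic positions of $u$ and $u'$ relative to the interleaved vertex sets of $c$, $x$ and $\Sigma^d c$, that $u$ and $u'$ must themselves intertwine --- impossible, since both lie in the cluster tilting subcategory $\Sigma^d\mathscr{T}$. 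Everything preceding this step is bookkeeping with the combinatorics of Section~4. (Since $\mathscr{C}(A_n^d)$ is also $2d$-Calabi-Yau, an alternative is to feed Propositions~\ref{theprop}(i) and~\ref{restateProp} to obtain an exact sequence $0\to\tilthoms{}(c,x)\to\homs{}(t_0,x)\to\cdots\to\homs{}(t_d,x)\to D\tilthoms{}(x,\Sigma^d c)\to0$ and argue from the $(d+2)$-angle of $c$ over $\mathscr{T}$; but identifying the middle terms combinatorially leads back to the same analysis.)
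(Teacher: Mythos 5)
Your reduction to the polygon combinatorics is set up correctly (the translation of ``does not factor through $\Sigma^d\mathscr{T}$'' into the non-existence of a $P$- or $Q$-summand of $\Sigma^d T$ is legitimate, since all Hom-spaces between indecomposables are at most one-dimensional), and the maximality argument producing a summand $u$ of $\Sigma^d T$ intertwining $c$ is sound. But the proof is not complete: the case you yourself flag as the main obstacle --- a ``mixed'' summand $u$ with $u_i<x_i$ for some $i$ and $u_j>x_j$ for some $j$ --- is exactly where all the content lies, and the sketch you give for it (find a summand $u'$ intertwining $x$ and show $u$ and $u'$ intertwine) is not carried out and is not obviously true as stated; for instance nothing in your setup prevents $u'=u$, and tracking the cyclic positions to force an intertwining of $u$ with $u'$ requires precisely the kind of argument you have deferred. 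As it stands this is a genuine gap, not a routine verification.

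The paper closes the argument by a different and much shorter route that sidesteps the mixed case entirely. It uses that $\mathscr{C}(A_n^d)$ is $2d$-Calabi--Yau, so $\homs{}(x,\Sigma^d(c))\neq 0$ gives $\homs{}(c,\Sigma^d(x))\neq 0$ by Serre duality, and it invokes the external result \cite[Lemma~6.1]{JustJorg} that $\tilthoms{}(x,\Sigma^d(c))$ and $\tilthoms{}(c,\Sigma^d(x))$ cannot both be non-zero. Hence the non-zero map $c\to\Sigma^d(x)$ must factor through $\Sigma^d\mathscr{T}$, and Proposition~\ref{propFact} then hands you an $s=\{s_0,\dots,s_d\}\in\Sigma^d\mathscr{T}$ with $c_i\le s_i\le x_i^-$ for all $i$; since $x_i^-\le x_i$, the same $s$ witnesses that $c\to x$ factors through $\Sigma^d\mathscr{T}$, contradicting $\tilthoms{}(c,x)\neq 0$. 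In your language: the cited lemma plus Serre duality directly manufactures a summand of $\Sigma^d T$ lying in the $P$-region, so one never has to analyse which side of $x$ an intertwining summand falls on. If you want to keep your purely combinatorial approach, you would need either to prove the mixed-summand exclusion honestly or to import an analogue of the Jacobsen--J{\o}rgensen lemma; otherwise the argument does not go through.
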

\begin{proof}
We begin with the assumption that $\tilthoms{}(c, x) \neq 0$ and $\tilthoms{}(x, \Sigma^d(c)) \neq 0$. It follows that $\homs{}(c, x) \neq 0$ so by Proposition \ref{homProp} there exist labellings $c = \{c_0, c_1, \ldots, c_d\}$ and $x = \{x_0, x_1, \ldots, x_d\}$ such that
\begin{align}\label{importantseq}
c_0 \leq x_0 \leq c_1^{--} < c_1 \leq x_1 \leq c_2^{--} < \ldots < c_d \leq x_d \leq c_0^{--}.
\end{align}

Similarly, we know that $\homs{}(x, \Sigma^d(c)) \neq 0$ and by applying the Serre duality, we see that $\homs{}(c, \Sigma^d(x)) \neq 0$. Combining this with (\ref{importantseq}) and Propositions \ref{propFunc} and \ref{homProp}, we see that the labellings of $c$ and $x$ are such that
\begin{align}\label{anotherImportantSeq}
c_0 \leq x_0^- \leq c_1^{--} < c_1 \leq x_1^- \leq c_2^{--} < \ldots < c_d \leq x_d^- \leq c_0^{--}.
\end{align} \\

By \cite[Lemma~6.1]{JustJorg}, $\tilthoms{}(x, \Sigma^d(c)) \neq 0$ implies that $\tilthoms{}(c, \Sigma^d(x)) = 0$. From Proposition \ref{propFact} and (\ref{anotherImportantSeq}), there exists an $s = \{s_0, s_1, \ldots, s_d\} \in \Sigma^d \mathscr{T}$ such that
\begin{align*}
c_0 \leq s_0 \leq x_0^- , c_1 \leq s_1 \leq x_1^-, \ldots, c_d \leq s_d \leq x_d^-.
\end{align*}
Combining this with (\ref{importantseq}) shows that
\begin{align*}
c_0 \leq s_0 \leq x_0 , c_1 \leq s_1 \leq x_1, \ldots, c_d \leq s_d \leq x_d
\end{align*}
which by Proposition \ref{propFact} means that $\tilthoms{}(c, x) = 0$. This contradicts our initial assumption, and proves the lemma.
\end{proof}
\paragraph{Proof of Theorem C}
\begin{proof}
$\mathscr{C} = \mathscr{C}(A_n^d)$ is a $K$-linear Hom-finite $2d$-Calabi-Yau $(d+2)$-angulated category with split idempotents. By Lemma \ref{lastLemma} for $c, x \in \mathscr{C}$ indecomposable, $\tilthoms{}(c, x)$ and $\tilthoms{}(x, \Sigma^d(c))$ cannot be simultaneously non-zero. Thus by Theorem A, when $d$ is odd, we have the result.
\end{proof}
It is natural to ask whether the assumption that $d$ is odd is necessary, as there is no analogue in the triangulated case. We provide a remark to show that it is:
\begin{remark}
Theorems A and B do not generalise to the case of $d$ being even.
\end{remark}
\begin{proof}
Suppose that $\mathscr{C}$ and $\mathscr{T}$ are as in the statement of Theorem B, except assume that $d=2$. Take some indecomposable $t \in \mathscr{T}$. Then $\textrm{Ind}_{\mathscr{T}}(t) = [t]$. We also have, by the definition of a $(d+2)$-angulated category, the trivial $4$-angle
\begin{align*}
t \to t \to 0 \to 0 \to \Sigma^d t,
\end{align*}
which gives the $4$-angle
\begin{align*}
t \to 0 \to 0 \to \Sigma^d t \to \Sigma^d t.
\end{align*}
Notice that this $4$-angle is of the form given in definition \ref{higherIndDef}, and shows that
\begin{align*}
\textrm{Ind}_{\mathscr{T}}(\Sigma^d t) = (-1)^2[t] = [t].
\end{align*}
Thus, if $t$ and $\Sigma^d t$  are not isomorphic, then we have two indecomposable objects of $\mathscr{T}$ that have equal index with respect to $\mathscr{T}$ and as such cannot be uniquely defined by their index. \\

An example of this is in the category $\mathscr{C}(A_2^2)$. We obtain an Oppermann-Thomas cluster tilting subcategory of $\mathscr{C}(A_2^2)$ by selecting all of the indecomposable elements containing the vertex $1$. We will call this subcategory $\mathscr{U}$. This includes the indecomposable $u = (1, 3, 5)$. Then $\Sigma^d u = (2, 4, 7)$. These two indecomposables are clearly not isomorphic, but they have the same index.
\end{proof}
\bibliography{mybib}{}
\bibliographystyle{mybib}
School of Mathematics and Statistics, Newcastle University, Newcastle upon Tyne, NE1 7RU, United Kingdom \\
\textit{Email address}: j.reid4@ncl.ac.uk
\end{document}